\theoremstyle{plain}
\newtheorem{theorem}{Theorem}[section]
\newtheorem{proposition}[theorem]{Proposition}
\newtheorem{corollary}[theorem]{Corollary}
\theoremstyle{definition}
\newtheorem{definition}[theorem]{Definition}
\newtheorem{example}[theorem]{Example}
\newtheorem{question}[theorem]{Question}
\newcommand{\qand}{\quad \mbox{ and } \quad}        
\newcommand{\qwhere}{\quad \mbox{ where } \quad}        
\newcommand{\T}{{\mathbb{T}}}                  
\newcommand{\M}{{\mathcal{M}}}                 
\newcommand{\G}{{\mathcal{G}}}                
\newcommand{\B}{{\mathcal{B}}}                
\newcommand{\rhk}[2]{\tilde{H}_{#1}(#2,k)}   
\newcommand{\xs}{x_1,\ldots,x_n}                
\newcommand{\m}{\mathbf{m}}   
\newcommand{\n}{\mathbf{n}}   
\newcommand{\q}{\mathbf{q}}   
\newcommand{\F}{{\mathcal{F}}}                  
\newcommand{\D}{\Delta}                         
\newcommand{\be}{\beta}                          
\newcommand{\lcm}{{\mathop{\rm{lcm}}}}          
\newcommand{\st}{\ : \ }                        
\newcommand{\tuple}[1]{\langle #1 \rangle}      
\newcommand{\LCM}{\mathop{\rm{LCM}}}
\newcommand{\dist}{\mathop{\rm{dist}}}
\newcommand{\facets}{\mathop{\rm{Facets}}} 
\newcommand{\NN}{\mathbb{N}} 
\renewcommand{\leq}{\leqslant}          
\renewcommand{\geq}{\geqslant}
\author{Sara Faridi\thanks{Department of
		Mathematics and Statistics, Dalhousie University, Halifax, Canada, 
		faridi@dal.ca}\thanks{Research supported by The Natural Sciences and Engineering Research Council of Canada (NSERC)}  \hspace{1in}
	Mayada Shahada \thanks{Mathematics Department, University of Bahrain, Kingdom of Bahrain, 
		MShahada@uob.edu.bh}}
\title{\Large \sc Well Ordered Covers, Simplicial
  Bouquets, and Subadditivity of Betti Numbers of Square-Free Monomial
  Ideals}
\begin{document}

\maketitle

\begin{abstract} Well ordered covers
  of square-free monomial ideals are subsets of the minimal generating
  set ordered in a certain way that give rise to a Lyubeznik
  resolution for the ideal, and have guaranteed
    nonvanishing Betti numbers in certain degrees. This paper is
  about square-free monomial ideals which have a well ordered
  cover. We consider the question of subadditivity of syzygies of
  square-free monomial ideals via complements in the lcm lattice of
  the ideal, and examine how lattice complementation breaks well
  ordered covers of the ideal into (well ordered) covers of
  subideals. We also introduce a family of well ordered
    covers called strongly disjoint sets of simplicial bouquets
    (generalizing work of Kimura on graphs), which are relatively easy to
    identify in simplicial complexes. We examine the subadditivity
    property via numerical characteristics of these bouquets.
  \end{abstract}

\section{Introduction}

This paper grew out of investigations into the subadditivity property
of syzygies of monomial ideals. For a homogeneous ideal $I$ of a
polynomial ring $S$, suppose the maximum degree $j$ such that
$\beta_{i,j}(S/I) \neq 0$ is denoted by $t_i$. The subadditivity
property is said to hold if we have $t_{a+b} \leq
  t_a+t_b$  for all positive values of $a$ and $b$.

While the subadditivity property or related inequalities are known to
hold in many special cases - certain cases for ideals of
  codimension $\leq 1$ (\cite[Corollary~4.1]{EHU}); some Koszul
rings~\cite{ACI}; when $I$ monomial ideal and $a=1$~\cite{HS}; certain
homological degrees for Gorenstein algebras~\cite{ElS}; when $a=1,2,3$
and $I$ monomial ideal generated in degree 2~\cite{FG,AN}; facet
ideals of simplicial forests~\cite{F1} - the problem is open for
monomial ideals and is known to fail (see Caviglia's
  example in~\cite[Example~4.5]{EHU}) for general homogeneous ideals.

In the case of monomial ideals, Betti numbers can be calculated as
ranks of homology modules of topological objects. In
  particular, the order complex of the lcm lattice of $I$ (the poset
  of least common multiples of the minimal monomial generating set of
  $I$ ordered by division) can be used for this purpose.  A
nonvanishing Betti number, in this context, corresponds
  by a result of Baclawski~\cite{B} to a ``complemented'' lcm lattice
(see \cref{s:subadditivity}).

This approach was initiated by the first author
in~\cite{F1}: the existence of complements which have nonvanishing
Betti numbers in the ``right'' homological degrees implies the
subadditivity property (\cref{q:main}). This approach was further
pursued by both authors in~\cite{FS}, where \cref{q:main} was
translated into the context of homological cycles in simplicial
complexes breaking into smaller ones.

This paper takes yet a different angle. The idea of complements really
comes down to the following: take a monomial $\m$ in the lcm lattice
of $I$. Among all the complements of $\m$ in the lcm lattice, can you
pick one, say $\m'$, which behaves desirably? If $\m$ is a square-free
monomial, this question is even simpler: $\m$ and $\m'$ correspond to
subsets $A$ and $A'$ of the variables $\{\xs\}$ such that $A\cup
A'=\{\xs\}$ and the product of the variables in $A \cap A'$ is not in
$I$. Can we consider the subideals induced on $A$ and $A'$ and extract
properties from/for them?

The main object of study using this approach will be ``well ordered
covers'' of ideals (\cref{d:woc}). The existence of a well ordered
cover of size $i$ is known, via the Lyubeznik resolution, to guarantee
a nonvanishing $i^{\text{th}}$ Betti number~\cite{EF2}. In this paper we
investigate when complements in the lcm lattice produce subideals with
well ordered covers of sizes $a$ and $b$ with $a+b=i$, in order to
result in the subadditivity property.

Moreover, we introduce \emph{strongly disjoint sets of
    simplicial bouquets}, which we show are always well ordered
  covers, and we demonstrate how they can be broken up to prove
  subadditivity in certain homological degrees. An advantage of
  simplicial bouquets is that they are rather easy to spot in
  simplicial complexes, as they do not rely on an ordering. Rather,
  one or more orderings are inherent in the definition (see
  \cref{t:bouquetsstrongly}).

In \cref{s:background} we set up the background on simplicial
resolutions. \cref{s:woc} introduces the reader to well ordered covers
of monomials. \cref{s:subadditivity} describes the subadditivity
property and contains one of the main results of the paper
(\cref{t:main}) which considers well ordered covers under
complementation. In \cref{s:bouquets} we introduce simplicial
  bouquets and show that certain types of simplicial bouquets are well
  ordered facet covers. We then apply the results of
  \cref{s:subadditivity} to simplicial complexes that contain strongly
  disjoint sets of bouquets, and show that the subadditivity property
  holds in degrees that come from the sizes of the bouquets
  (\cref{t:bouquets}). \cref{s:reordering} offers ways to optimize
the order of monomials in a well ordered cover to get the best
possible subadditivity results.

With some numerical manipulation, the results of this paper can be
adapted to non-square-free monomial ideals via polarization~\cite{Fr},
a method that transforms a monomial ideal into a square-free one which
retains many of the algebraic properties of the original ideal,
including the minimal free resolution.

\subsubsection*{Acknowledgements} The authors are grateful to the referees for
their comments, which helped improve this paper.
\section{Background}\label{s:background} 
\subsection{Simplicial complexes and facet ideals}

A {\bf simplicial complex} $\Delta$ on a finite vertex set $V(\Delta)$
is a set of subsets of $V(\Delta)$ such that $\{v\} \in \Delta$ for
every $v \in V(\Delta)$ and if $F \in \Delta$, then for every $G
\subseteq F$, we have $G \in \Delta$. The elements of $\Delta$ are
called {\bf faces}; the maximal faces with respect to inclusion are
called {\bf facets}, and a simplicial complex contained in $\Delta$ is
called a {\bf subcomplex} of $\Delta$. The set of all facets in
$\Delta$ defines $\Delta$ and is denoted by
$\facets(\Delta)$. If $\facets(\Delta)=\{F_1,\ldots,F_q\}$, then we
write $\Delta=\tuple{F_1,\ldots,F_q}$. 

A {\bf subcollection} of $\Delta$ is a subcomplex of
  $\Delta$ whose facets are also facets of $\Delta$. If $A \subseteq
  V(\Delta)$, then the {\bf induced subcollection} $\Delta_{[A]}$ is
  the simplicial complex defined as $$\Delta_{[A]}= \left \langle F
  \in \facets(\Delta) \st F \subseteq A \right \rangle.$$

We say a facet $F$ contains a {\bf free vertex} $v$ if $v \notin G$
for every facet $G \in \facets(\Delta) \setminus \{F\}$.

Given a simplicial complex $\Delta$ on vertices $\{x_1,\ldots,x_n\}$, we can define the {\bf facet ideal} of $\Delta$ as
	$$\F(\Delta)= \big (\prod_{x_i \in F}x_i \st  F \mbox{ is a
  facet of } \Delta \big )$$ which is an ideal of
$S=k[x_1,\ldots,x_n]$. Conversely, given a square-free monomial ideal
$I \subset S$, the {\bf facet complex} of $I$ is the
simplicial complex
$$\F(I)=\big \langle  F \st \prod_{x_i \in F} x_i  \text{ is a generator of } I \big \rangle.$$

\begin{example}\label{e:running-1}
  
	For $I=(xy,yz,zu)$, the simplicial complex $\F(I)$ is below.
	
	\begin{center}
		\begin{tikzpicture}	
		\coordinate (x) at (0,0);
		\coordinate (y) at (0,1);
		\coordinate (z) at (2,1);
		\coordinate (u) at (2,0);
		
		\draw (x)node[circle,fill,inner sep=1pt,label=below:$x$](x){};
		\draw (y)node[circle,fill,inner sep=1pt,label=above:$y$](y){};
		\draw (z)node[circle,fill,inner sep=1pt,label=above:$z$](z){};
		\draw (u)node[circle,fill,inner sep=1pt,label=below:$u$](u){};
		\draw [-,line width=0.7pt](x)--(y);
		\draw (y)--(z);
		\draw [-,line width=0.7pt](z)--(u);
		\end{tikzpicture}
	\end{center}

\end{example}

\subsection{Simplicial Resolutions}

Any monomial ideal $I$ of $S=k[\xs]$ admits a minimal
  {\bf graded free resolution}, which is an exact sequence of free
  $S$-modules
$$ 0 \to \oplus_{j \in \NN} S(-j)^{\beta_{p,j}}\to \oplus_{j \in \NN}
S(-j)^{\beta_{p-1,j}} \to \cdots \to \oplus_{j \in \NN}
S(-j)^{\beta_{1,j}} \to S.$$ For each $i$ and $j$, the rank
$\beta_{i,j}(S/I)$ of the free $S$-modules appearing above are called
the {\bf graded Betti numbers} of the $S$-module $S/I$, and the {\bf
  total Betti number} in homological degree $i$ is $$\be_{i}(S/I)=\sum_j
\be_{i,j}(S/I).$$

If $I$ is generated by monomials, the graded Betti
  numbers can be further refined into sums of \emph{multigraded Betti
    numbers}.  For a monomial $\m$ in $S$, the {\bf multigraded Betti
  number} of $S/I$ is of the form $\be_{i,\m}(S/I)$ and we have
\begin{equation}\label{e:multigraded}
  \be_{i,j}(S/I)=\sum \be_{i,\m}(S/I)
\end{equation}
where the sum is taken over all monomials $\m$ of degree
  $j$ that are least common multiples of subsets of the minimal
  monomial generating set of $I$.

The multigraded Betti numbers $\be_{i,\m}(S/I)$ are related to
the combinatorics of the ideal $I$. Given a monomial
ideal $I$ minimally generated by $\m_1,\ldots,\m_q$,
one can consider a simplicial complex $\Gamma$ on $q$ vertices
$\{v_1,\ldots,v_q \}$, where each vertex $v_i$ is labeled with the
monomial generator $\m_i$, and each face $\tau$ of $\Gamma$ is
labeled with the monomial
$$\lcm(\tau)=\lcm \big (\m_i\st\m_i \in \tau \big ).$$

We say that $\Gamma$ {\bf supports a free resolution of $I$} if the
simplicial chain complex of $\Gamma$ can be ``homogenized'', using the
monomial labels of the faces, to produce a free resolution of $I$. For
details of homogenization of a chain complex see~\cite[Section~55]{P}. The
resulting free resolution is called a {\bf simplicial resolution}.

{\bf Taylor's resolution} (\cite{T}, see also~\cite[Construction~26.5]{P}) is an example of a simplicial
resolution where the underlying simplicial complex is a full simplex
$\T(I)$ over the vertex set labeled with the monomial generators
$\{\m_1,\ldots,\m_q\}$ of $I$, called the \textbf{Taylor complex} of
$I$. It is known that all simplicial complexes supporting a free
resolution of $I$ are subcomplexes of the Taylor complex~(\cite{M}),
in other words: all simplicial resolutions are contained in the Taylor
resolution.

If $\Gamma$ is a simplicial complex supporting a free
resolution of $I$, and $\m$ is a monomial in $S$, the simplicial
subcomplex $\Gamma_{<\m}$ is defined as
	\begin{center}
	  $\Gamma_{<\m}=\{ \tau \in \Gamma\st \lcm(\tau)
          \mbox{ strictly divides } \m\}$.
	\end{center}

\begin{example}\label{e:running-2} Consider $I=(xy,yz,zu)$ in $k[x,y,z,u]$.
     The Taylor complex $\T(I)$ and a subcomplex
          $\T(I)_{<xyzu}$ are shown in the following figures.
	
	\vspace{0.5cm}
	\begin{minipage}{.2\textwidth}
		\begin{tikzpicture}	
		\coordinate (ac) at (-1.5,0);
		\coordinate (ae) at (0,2);
		\coordinate (ab) at (1.5,0);
		\coordinate (C) at (0,1);

		\draw [top color=gray!20,bottom color=gray!20](ac)node[circle,fill,inner sep=1pt,label=left:$zu$](ac){}--(ae)node[circle,fill,inner sep=1pt,label=above:$xy$](ae){}--(ab)node[circle,fill,inner sep=1pt,label=right:$yz$](ab){}--(ac);

		\draw (C)node[anchor=midway,below]{\footnotesize{$xyzu$}};
		
		\tkzLabelSegment[above left=-2pt and -2pt](ac,ae){\footnotesize{$xyzu$}}  
		\tkzLabelSegment[above right=-2pt and -2pt](ae,ab){\footnotesize{$xyz$}}  
		\tkzLabelSegment[below=0pt and -2pt](ac,ab){\footnotesize{$yzu$}}
		
		\node [below=1cm, align=flush center,text width=8cm] at (0.1,0){$\T(I)$};
		\end{tikzpicture}
		
	\end{minipage}
	\hspace{3cm}
	\begin{minipage}{.2\textwidth}
		\begin{tikzpicture}

		\coordinate (ac) at (-1.5,0);
		\coordinate (ae) at (0,2);

		\coordinate (ab) at (1.5,0);
		\coordinate (C) at (0,0.85);

		\draw (ac)node[circle,fill,inner sep=1pt,label=left:$zu$](ac){}--(ab)node[circle,fill,inner sep=1pt,label=right:$yz$](ab){}--(ae)node[circle,fill,inner sep=1pt,label=above:$xy$](ae){};

		\tkzLabelSegment[above right=-2pt and -2pt](ae,ab){\footnotesize{$xyz$}}  
		\tkzLabelSegment[below=0pt and -2pt](ac,ab){\footnotesize{$yzu$}}
		
		\node [below=1cm, align=flush center,text width=8cm] at (0.3,0){$\T(I)_{<xyzu}$};
		
		\end{tikzpicture}
	\end{minipage}
\end{example}

If $\Gamma$ supports a free resolution of a
monomial ideal $I$, then for a fixed integer $i$, the Betti number
$\be_{i}(S/I)$ is bounded above by the number of $(i-1)$-faces of
$\Gamma$. Therefore, the more we \say{shrink} the supporting complex
$\Gamma$, the better we bound the Betti numbers.

In particular, as stated by Bayer and
  Sturmfels~\cite{BS}, the multigraded Betti numbers of $I$ can be
determined by the dimensions of reduced homologies of subcomplexes of
$\D$. The statement below is from Peeva's
  textbook~\cite{P}.

\begin{theorem}[\cite{P},~Theorem~57.6]\label{t:BS}
  Let $I$ be a proper monomial ideal of $S$ which is minimally
  generated by the monomials $\m_1,\ldots,\m_q$, and
    suppose that $I$ has a free resolution supported on a simplicial
    complex $\Gamma$. For $i> 0$ and a monomial $\m$ of positive
    degree, the multigraded Betti numbers of $I$ are given by
	\begin{align}
	\nonumber \be_{i,\m}(S/I)&=\left\{ \begin{array}{ccl}
          \dim_k\rhk{i-2}{\Gamma_{<\m}} & \mbox{if} &
          \m \mid \lcm(\m_1,\ldots,\m_q)\\0 &
          \mbox{ } & \text{otherwise.} \end{array} \right.
	\end{align}
\end{theorem}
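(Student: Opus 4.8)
The plan is to compute $\be_{i,\m}(S/I)$ as a multigraded component of $\operatorname{Tor}$ and then to identify that $\operatorname{Tor}$-module with the reduced homology of $\Gamma_{<\m}$ by inserting a second subcomplex of $\Gamma$ between $\Gamma_{<\m}$ and $\Gamma$. First I would use the standard identification $\be_{i,\m}(S/I)=\dim_k\operatorname{Tor}^S_i(S/I,k)_\m$, where $k=S/(\xs)$. Since by hypothesis $\Gamma$ supports a free resolution $\F_\Gamma$ of $S/I$, obtained by homogenizing the simplicial chain complex of $\Gamma$, the module $\operatorname{Tor}$ can be computed from it: $\operatorname{Tor}^S_i(S/I,k)_\m=H_i(\F_\Gamma\otimes_S k)_\m$. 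Because extracting the multidegree-$\m$ strand is an exact operation, this equals the $i$-th homology of the finite complex of $k$-vector spaces $(\F_\Gamma\otimes_S k)_{\bullet,\m}$, and the crux is to describe that complex combinatorially.

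In $\F_\Gamma$ the basis element $e_\tau$ attached to an $(i-1)$-face $\tau$ of $\Gamma$ lives in multidegree $\lcm(\tau)$, and the differential is $e_\tau\mapsto\sum_\sigma\pm\tfrac{\lcm(\tau)}{\lcm(\sigma)}\,e_\sigma$, summed over the codimension-one faces $\sigma$ of $\tau$. After tensoring with $k$ over $S$, a coefficient $\lcm(\tau)/\lcm(\sigma)$ becomes $0$ unless it is a unit, that is, unless $\lcm(\sigma)=\lcm(\tau)$; and in multidegree $\m$ only the $e_\tau$ with $\lcm(\tau)=\m$ survive. Introducing the subcomplex $\Gamma_{\leq\m}=\{\tau\in\Gamma\st\lcm(\tau)\mid\m\}$, which contains $\Gamma_{<\m}$, the faces of $\Gamma_{\leq\m}$ that do not lie in $\Gamma_{<\m}$ are precisely those with $\lcm$ equal to $\m$; hence $(\F_\Gamma\otimes_S k)_{\bullet,\m}$ is, up to a shift of the homological index by one, the relative simplicial chain complex of the pair $(\Gamma_{\leq\m},\Gamma_{<\m})$ with coefficients in $k$. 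This yields $\be_{i,\m}(S/I)=\dim_k H_{i-1}(\Gamma_{\leq\m},\Gamma_{<\m};k)$.

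To conclude I would feed this into the long exact homology sequence of the pair $(\Gamma_{\leq\m},\Gamma_{<\m})$. Running the same bookkeeping on $\F_\Gamma$ \emph{before} tensoring with $k$ identifies $(\F_\Gamma)_{\bullet,\m}$ with the augmented simplicial chain complex of $\Gamma_{\leq\m}$; since $\F_\Gamma$ resolves $S/I$ and $(S/I)_\m=0$ for $\m\in I$, this forces $\rhk{j}{\Gamma_{\leq\m}}=0$ for all $j$ whenever $\m\in I$ has positive degree (alternatively one may invoke the standard acyclicity criterion for simplicial resolutions, see \cite{P}). With $\Gamma_{\leq\m}$ acyclic the long exact sequence collapses to a connecting isomorphism $H_{i-1}(\Gamma_{\leq\m},\Gamma_{<\m};k)\cong\rhk{i-2}{\Gamma_{<\m}}$, which is the asserted formula. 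The remaining situations are dealt with by inspection: if $\m\nmid\lcm(\m_1,\ldots,\m_q)$ then the relative chain complex above is identically zero and $\be_{i,\m}(S/I)=0$; and the degenerate cases in which $\m$ is not a least common multiple of a subset of $\m_1,\ldots,\m_q$, together with the small values of $i$, are checked directly from the usual conventions for $\Gamma_{<\m}$ and its reduced homology (so that, e.g., a minimal generator $\m$ gives $\rhk{-1}{\Gamma_{<\m}}\cong k$ and $\be_{1,\m}(S/I)=1$).

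I expect the combinatorial identification in the second paragraph to be the main obstacle: matching the multidegree-$\m$ strand of the homogenized complex with a relative simplicial chain complex on the nose, in particular tracking the shift of homological degree by one and pinning down the boundary cases (unit monomials, the bottom of the lcm lattice, $i=1$) so that the clean statement holds exactly rather than up to a correction term. Establishing acyclicity of $\Gamma_{\leq\m}$ is the second delicate point, and it is precisely there that one uses the hypothesis that $\Gamma$ \emph{supports a resolution}, rather than merely carrying a complex of free $S$-modules.
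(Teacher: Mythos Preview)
The paper does not supply its own proof of this statement: \cref{t:BS} is quoted as background from Peeva's book~\cite[Theorem~57.6]{P} (originating in Bayer--Sturmfels~\cite{BS}) and is used without argument. So there is nothing in the paper to compare your proposal against directly.

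That said, your outline is the standard argument and is correct. The identification of $(\F_\Gamma\otimes_S k)_{\bullet,\m}$ with the relative chain complex $C_\bullet(\Gamma_{\leq\m},\Gamma_{<\m};k)$ (shifted by one), followed by the long exact sequence of the pair together with the acyclicity of $\Gamma_{\leq\m}$, is exactly how the result is proved in~\cite{P} and~\cite{BS}. Your remark that acyclicity of $\Gamma_{\leq\m}$ comes from the exactness of $(\F_\Gamma)_{\bullet,\m}$ is the right mechanism, and you correctly flag that this uses $\m\in I$; when $\m\notin I$ one instead has $\Gamma_{\leq\m}=\Gamma_{<\m}$ (no face has $\lcm$ equal to $\m$), so the relative complex vanishes. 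One small caution: the clean formula $\be_{i,\m}=\dim_k\rhk{i-2}{\Gamma_{<\m}}$ really needs $\m$ to lie in $\LCM(I)$ (equivalently $\m\in I$ when $\deg\m>0$), since otherwise $\Gamma_{<\m}=\{\emptyset\}$ and $\rhk{-1}{\Gamma_{<\m}}=k$ while $\be_{1,\m}=0$. The paper tacitly assumes this via~\eqref{e:multigraded}, and your phrase ``checked directly from the usual conventions'' covers it, but it is worth stating explicitly rather than leaving it to conventions.
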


\begin{example} 
	For $I=(xy,yz,zu)$ in \cref{e:running-2},
        $\T(I)_{<xyzu}$ is acyclic and hence
        $\be_{i,xyzu}(S/I)=0$ for all $i$.
\end{example}

For a monomial ideal $I$ of $S$ which is minimally generated by the
set $\G=\{\m_1,\ldots,\m_q\}$, the {\bf lcm
  lattice} of $I$, denoted by $\LCM(I)$, is the bounded lattice whose
elements are the least common of subsets of $\G$ ordered by
divisibility.  The {\bf top} element of $\LCM(I)$ is
$\hat{1}=\lcm(\m_1,\ldots,\m_q)$ and the {\bf bottom}
element is $\hat{0}=1$ regarded as the $\lcm$ of the empty set. The
least common multiple of elements in $\LCM(I)$ is their {\bf join}.

\begin{example}
	The following is the $\LCM(I)$ for $I=(xy,yz,zu)$ from \cref{e:running-2}.
	\begin{center}
	\begin{tikzpicture}	
		\coordinate (id) at (0,0);
		\coordinate (xy) at (-2,1);
		\coordinate (yz) at (0,1);
		\coordinate (zu) at (2,1);
		\coordinate (xyz) at (-1,2);
		\coordinate (yzu) at (1,2);	
		\coordinate (xyzu) at (0,3);

		\draw (id)node[circle,fill,inner sep=1pt,label=below:$1$](id){}--(zu)node[circle,fill,inner sep=1pt,label=right:$zu$](zu){};
		\draw (id)--(yz)node[circle,fill,inner sep=1pt,label=right:$yz$](yz){};
		\draw (id)--(xy)node[circle,fill,inner sep=1pt,label=left:$xy$](xy){};		
		\draw (zu)--(yzu)node[circle,fill,inner sep=1pt,label=right:$yzu$](yzu){};	
		\draw (yz)--(yzu){};
		\draw (xy)--(xyz)node[circle,fill,inner sep=1pt,label=left:$xyz$](xyz){};
		\draw (yz)--(xyz){};
		\draw (yzu)--(xyzu)node[circle,fill,inner sep=1pt,label=above:$xyzu$](xyzu){};
		\draw (xyz)--(xyzu){};			
		\draw (zu)--(xyzu){};			
		\draw (xy)--(xyzu){};			
	\end{tikzpicture}
	\end{center}
	
	\end{example}

\section{(Well ordered) covers}\label{s:woc}

Let $\Delta$ be a simplicial complex.  A set $D \subseteq
\facets(\Delta)$ is called a {\bf facet cover} of $\Delta$ if every
vertex $v$ of $\Delta$ belongs to some facet $F$ in $D$. A facet cover
is called {\bf minimal} if no proper subset of it is a facet cover of
$\Delta$. For instance, the simplicial complex in \cref{e:running-1}
has the set $\{\{x,y\},\{z,u\}\}$ as a minimal facet cover.

	\begin{center}
		\begin{tikzpicture}	
		\coordinate (x) at (0,0);
		\coordinate (y) at (0,1);
		\coordinate (z) at (2,1);
		\coordinate (u) at (2,0);
		
		\draw (x)node[circle,fill,inner sep=1pt,label=below:$x$](x){};
		\draw (y)node[circle,fill,inner sep=1pt,label=above:$y$](y){};
		\draw (z)node[circle,fill,inner sep=1pt,label=above:$z$](z){};
		\draw (u)node[circle,fill,inner sep=1pt,label=below:$u$](u){};
		\draw [-,line width=0.7pt,color=teal](x)--(y);
		\draw (y)--(z);
		\draw [-,line width=0.7pt,color=teal](z)--(u);
		\end{tikzpicture}
	\end{center}

We can translate a minimal facet cover of a simplicial
  complex to its facet ideal.  If $I=(\m_1,\ldots, \m_q)$ is a
square-free monomial ideal and $\Delta=\F(I)=\langle F_1,\ldots,F_q
\rangle$ so that each $\m_i$ is the product of the vertices in $F_i$,
then we say $\m_{i_1},\ldots,\m_{i_t}$ is a {\bf (minimal) cover of
  $I$} to imply that $F_{i_1},\ldots,F_{i_t}$ is a (minimal) facet
cover of $\Delta$.

For the sake of simplicity, assume that every variable $x_i$ appears
in at least one of the generators of $I$. If $\M=\{\m_{i_1},\ldots
,\m_{i_t}\}$ is a minimal cover of $I$, then one can see that for
every $j \in \{1,\ldots , t \}$
$$\lcm(\m_{i_1},\ldots ,\widehat{\m}_{i_j},\ldots ,\m_{i_t})\neq
\lcm(\m_{i_1},\ldots ,\m_{i_t})=x_1 \ldots x_n.$$ This implies that if
$\T(I)$ is the Taylor complex of $I$, then $\T(I)_{<x_1 \ldots x_n}$
contains the boundary of a hollow $(t-2)$-cycle, which by \cref{t:BS}
means that we could potentially have $\beta_{t,x_1\ldots x_n}(S/I)
\neq 0$.

In other words, the existence of a minimal cover $\M$ of length $t$
indicates that we might have a ``top degree'' Betti number
$\beta_{t,n}$. By ordering the generators of $I$ we can make $\M$ into
a ``well ordered'' cover, which, using a Lyubeznik resolution (a
simplicial resolution that is based on ordering the generators of an
ideal~\cite{L}), \emph{guarantees} the nonvanishing of $\beta_{t,n}$
(\cref{t:Nursel}).

\begin{definition}[\cite{EF2},~Definition 3.1]\label{d:wofc}
  Let $\Delta$ be a simplicial complex. A sequence of
    $F_1,\ldots,F_s$ of facets of $\Delta$ is called a {\bf well
      ordered facet cover} if it is minimal facet cover of $\Delta$,
    and for every facet $F^\prime \notin \{F_1,\ldots,F_s\}$ of
    $\Delta$ there exists $j \leq s-1$ such that $$F_{j} \subseteq
    F^\prime \cup F_{j+1} \cup \cdots \cup F_s.$$
\end{definition}

  The definition below is an equivalent version of \cref{d:wofc}
  stated for monomial ideals.

\begin{definition}[{\bf Well ordered cover}]\label{d:woc}
        A sequence $\m_1,\ldots,\m_s$ of generators of a
          square-free monomial ideal $I$ is called a {\bf well
          ordered cover} of $I$ if $\{\m_1,\ldots,\m_s\}$ is a minimal
        cover of $I$ and for every generator $\m^\prime \notin
        \{\m_1,\ldots,\m_s\}$ of $I$ there exists $j \leq s-1$ such
        that $$\m_{j} \mid \lcm(\m^\prime,\m_{j+1},\ldots,\m_s).$$
\end{definition}

\begin{example}

	\begin{enumerate}
		
		\item For $I=(abz,bcz,xyz,axz)$, $\{abz,bcz,xyz\}$ is a well ordered cover since $abz$ divides $\lcm(axz,bcz,xyz)$.
		
		\begin{center}
			\begin{tikzpicture}	
			\coordinate (x) at (-1,1);
			\coordinate (y) at (-2,0);
			\coordinate (z) at (0,0);
			\coordinate (a) at (1,1);
			\coordinate (b) at (2,0);
			\coordinate (c) at (1,-1);
			
			\draw [top color=teal!50,bottom color=teal!50](x)node[circle,fill,inner sep=1pt,label=above:$x$](x){}--(y)node[circle,fill,inner sep=1pt,label=left:$y$](y){}--(z)node[circle,fill,inner sep=1pt,label=below:$z$](z){}--cycle;
			
			\draw [top color=gray!20,bottom color=gray!20](z)--(a)node[circle,fill,inner sep=1pt,label=above:$a$](a){}--(x)--(z);

			\draw [top color=teal!50,bottom color=teal!50](z)--(b)node[circle,fill,inner sep=1pt,label=right:$b$](b){}--(a)--cycle;
			
			\draw [top color=teal!50,bottom color=teal!50](z)--(c)node[circle,fill,inner sep=1pt,label=below:$c$](c){}--(b)--cycle;
			
			\end{tikzpicture}
		\end{center}
		\item For $I=(xy,yz,zu)$, $\{xy,zu\}$ is a minimal cover that is not a well ordered cover of $I$ since $xy \nmid \lcm(yz,zu)$ and $zu \nmid \lcm(yz,xy)$.
		
		\begin{center}
			\begin{tikzpicture}	
			\coordinate (x) at (0,0);
			\coordinate (y) at (0,1);
			\coordinate (z) at (2,1);
			\coordinate (u) at (2,0);

			\draw (x)node[circle,fill,inner sep=1pt,label=below:$x$](x){}--(y)node[circle,fill,inner sep=1pt,label=above:$y$](y){}--(z)node[circle,fill,inner sep=1pt,label=above:$z$](z){}--(u)node[circle,fill,inner sep=1pt,label=below:$u$](u){};
			\end{tikzpicture}
		\end{center}
Notice that $I$ has no well ordered cover since $\{xy,zu\}$ is the
only possible minimal cover and it is not well ordered.
		
	\end{enumerate}
\end{example}

A class of examples of well ordered facet covers is
  \emph{(simplicial) bouquets}, which will be discussed in
  \cref{s:bouquets}.

  As the following theorem shows, well ordered covers are facets
in a Lyubeznik complex, and hence ensure
nonvanishing multigraded Betti numbers.

\begin{theorem}[\cite{EF2}]\label{t:Nursel} 
	Let $\mathcal{M}=\{\m_{1},\ldots,\m_s\}$ be a well ordered
        cover of a square-free monomial ideal $I$. Then there is a total
        order $<$ on the set of generators of $I$ such that
        $\mathcal{M}$ is a facet of the Lyubeznik simplicial complex
        and hence $\be_{s,\m}(S/I)\neq 0$ where
        $\m=\lcm(\m_1,\ldots,\m_s)$.
\end{theorem}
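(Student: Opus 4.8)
The plan is to construct an explicit total order on the generators of $I$ that realizes $\mathcal{M}$ as a facet of the associated Lyubeznik complex, and then invoke \cref{t:BS} to deduce the nonvanishing of $\be_{s,\m}(S/I)$. Recall that the Lyubeznik complex $L_{<}(I)$ associated to a total order $\m_{j_1} < \m_{j_2} < \cdots < \m_{j_q}$ on the minimal generators has as its faces those subsets $\{\m_{i_1},\ldots,\m_{i_t}\}$ (with $i_1 < i_2 < \cdots < i_t$ in the chosen order) such that for no $\ell < i_1$ does $\m_\ell$ divide $\lcm(\m_{i_1},\ldots,\m_{i_t})$, and moreover this ``rooted'' condition is inherited by all subsets of the form $\{\m_{i_k}, \m_{i_{k+1}},\ldots,\m_{i_t}\}$. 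So the combinatorial heart of the matter is: choose the order so that $\mathcal{M}=\{\m_1,\ldots,\m_s\}$ sits as a face, i.e. so that no generator earlier than $\m_1$ divides $\m=\lcm(\m_1,\ldots,\m_s)$, and so that for each tail $\{\m_j,\m_{j+1},\ldots,\m_s\}$ no earlier generator divides its lcm.

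First I would order the generators in $\mathcal{M}$ themselves by putting $\m_s < \m_{s-1} < \cdots < \m_1$, that is, reverse to the indexing given by the well ordered cover, and then list all the remaining generators $\m'$ of $I$ afterward (in any order). With this choice, $\mathcal{M}$ viewed as a subset has its elements appearing in the order $\m_s, \m_{s-1}, \ldots, \m_1$, and its tails are exactly the sets $\{\m_s,\m_{s-1},\ldots,\m_j\}$ for $j = 1,\ldots,s$; the "earlier" generators relative to such a tail are $\m_{j-1},\ldots,\m_1$ (plus nothing else, since everything outside $\mathcal{M}$ comes later). So I must check two things: (1) for each $j$, none of $\m_1,\ldots,\m_{j-1}$ divides $\lcm(\m_j,\m_{j+1},\ldots,\m_s)$; and (2) $\mathcal{M}$ is actually maximal, i.e. a facet, not just a face. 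For (1), suppose $\m_i \mid \lcm(\m_j,\ldots,\m_s)$ with $i < j \leq s$. Then since $\m_i$ also trivially divides $\lcm(\m_1,\ldots,\m_s) = \m$ in general, I want a contradiction with minimality of the cover $\mathcal{M}$: the key observation is that $\lcm(\m_1,\ldots,\widehat{\m_i},\ldots,\m_s)$ would then equal $\m$, because $\m_i$'s variables are already supplied by $\m_j,\ldots,\m_s$, contradicting that $\mathcal{M}$ is a \emph{minimal} cover (using the remark in the excerpt that for a minimal cover, dropping any generator strictly decreases the lcm). This is where minimality of the cover gets used; the well-ordered condition of \cref{d:woc} is what handles the generators $\m'$ outside $\mathcal{M}$ — for such $\m'$, when it is appended we need that it does not create a larger face containing all of $\mathcal{M}$, equivalently (roughly) that $\m'$ together with $\mathcal{M}$ fails the rooted condition, which is exactly arranged by the hypothesis that some $\m_j \mid \lcm(\m',\m_{j+1},\ldots,\m_s)$.

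The step I expect to be the main obstacle is (2), verifying that $\mathcal{M}$ is a \emph{facet} of $L_{<}(I)$ and not merely a face, together with correctly bookkeeping how the definition of the Lyubeznik complex interacts with the ordering of the generators \emph{outside} $\mathcal{M}$. Concretely, I need: for every generator $\m'\notin\mathcal{M}$, the set $\mathcal{M}\cup\{\m'\}$ is \emph{not} a face of $L_{<}(I)$. Placing $\m'$ after all of $\mathcal{M}$ in the order, the relevant tails of $\mathcal{M}\cup\{\m'\}$ include $\{\m',\m_j,\m_{j+1},\ldots,\m_s\}$ — wait, no: $\m'$ is last, so tails are $\{\m_s,\ldots,\m_j, \m'\}$ read in order $\m_s,\ldots,\m_1$ is wrong since $\m'$ comes after $\m_1$; the order inside $\mathcal{M}\cup\{\m'\}$ is $\m_s,\m_{s-1},\ldots,\m_1,\m'$, so the subset $\{\m_j,\m_{j+1},\ldots,\m_s\}\cup\{\m'\}$ has to be checked for rootedness, and we need some $\m_{\text{earlier}}$ to divide its lcm. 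Here the hypothesis gives $j$ with $\m_j \mid \lcm(\m',\m_{j+1},\ldots,\m_s)$, and $\m_j$ appears earlier in the order than each of $\m_{j-1},\ldots,\m_1$? No — I need $\m_j$ to be earlier than the \emph{first} element of the offending subset. I would therefore look at the subset $S' = \{\m_{j-1}, \m_{j-2}, \ldots, \m_1, \m'\}$ (those elements coming \emph{after} $\m_j$ in the order), whose lcm is divisible by $\m_j$ since $\m_j \mid \lcm(\m',\m_{j+1},\ldots,\m_s)$ forces $\m_j \mid \lcm$ of an even larger set $\{\m',\m_{j+1},\ldots,\m_s,\ldots\}$... this requires care, and getting the precise subset right is the delicate part. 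Once $\mathcal{M}$ is established as a facet of $L_{<}(I)$, since the Lyubeznik resolution is a simplicial resolution supported on $L_{<}(I)$, \cref{t:BS} applied with $\Gamma = L_{<}(I)$ and the monomial $\m = \lcm(\m_1,\ldots,\m_s)$ gives $\be_{s,\m}(S/I) = \dim_k \rhk{s-2}{(L_{<}(I))_{<\m}}$, and the facet structure shows $(L_{<}(I))_{<\m}$ contains the boundary of the $(s-1)$-simplex on $\mathcal{M}$ as a subcomplex that is not filled in, yielding a nonzero class in $\tilde H_{s-2}$, hence $\be_{s,\m}(S/I)\neq 0$.
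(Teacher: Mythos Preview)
First, note that this paper does not prove \cref{t:Nursel}; it is quoted from~\cite{EF2} without proof, so there is nothing here to compare against directly. Assessing your argument on its own: the overall strategy is right, but the total order you specify is backwards, and this is a real error rather than a harmless bookkeeping slip. With $\m_s<\m_{s-1}<\cdots<\m_1$ (and the remaining generators afterwards), the Lyubeznik ``tails'' of $\mathcal{M}$ are the sets $\{\m_j,\m_{j-1},\ldots,\m_1\}$, not $\{\m_s,\ldots,\m_j\}$, and with this order the well ordered cover hypothesis does \emph{not} show that $\mathcal{M}\cup\{\m'\}$ fails to be rooted: you would need some $\m_i$ with $i>j$ dividing $\lcm(\m_1,\ldots,\m_j,\m')$, whereas \cref{d:woc} only hands you $\m_j\mid\lcm(\m',\m_{j+1},\ldots,\m_s)$. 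Concretely, take $\m_1=ac$, $\m_2=ab$, $\m_3=de$, $\m'=cd$ in $k[a,b,c,d,e]$; then $\m_1,\m_2,\m_3$ is a well ordered cover of $(ac,ab,de,cd)$, but under your order $de<ab<ac<cd$ the set $\{\m_1,\m_2,\m_3,\m'\}$ \emph{is} a face of the Lyubeznik complex, so $\mathcal{M}$ is not a facet. The fix is simply to use $\m_1<\m_2<\cdots<\m_s$ followed by the other generators; then for each $\m'\notin\mathcal{M}$ the tail $\{\m_{j+1},\ldots,\m_s,\m'\}$ has the earlier element $\m_j$ dividing its $\lcm$, exactly by \cref{d:woc}, so $\mathcal{M}\cup\{\m'\}$ is never a face and $\mathcal{M}$ is a facet.

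Your last step also needs an extra word. Having $\mathcal{M}$ as a facet does not by itself force $\partial\mathcal{M}$ to be a nonbounding cycle in $(L_<(I))_{<\m}$, since other $(s{-}1)$-faces with label $\m$ may exist. The clean argument is to tensor the Lyubeznik resolution with $k$ and look in multidegree $\m$: the basis element $e_{\mathcal{M}}$ is a cycle because every codimension-one subface of $\mathcal{M}$ has strictly smaller $\lcm$ (this is minimality of the cover), and $e_{\mathcal{M}}$ is not a boundary because $\mathcal{M}$, being a facet, is contained in no $s$-face. Hence $\beta_{s,\m}(S/I)\neq 0$.
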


The converse of the above theorem holds in some cases, for example
when $I$ is the facet of a simplicial forest $\Delta$, every nonzero Betti number of $I$ corresponds to a well ordered
  facet cover of an induced subcollection of $\Delta$~(\cite{EF2}).

\section{The subadditivity property}\label{s:subadditivity}

In this section, we will explore how we can use well ordered 
covers to consider the {\bf subadditivity property} for the maximal
degrees of syzygies of square-free monomial ideals. Let $$t_a=\max \{j
\st \be_{a,j}(S/I) \neq 0\}.$$ We say that $I$ satisfies the
subadditivity property if for all $a,b>0$ with $a+b \leq$ the
projective dimension of $S/I$, $$t_{a+b} \leq t_a + t_b.$$

In what follows, we will be moving back and forth between a
square-free monomial ideal $I$ and its facet complex $\D=\F(I)$. For a
monomial $\m \in \LCM(I)$ where $\m=x_{i_1}\cdots x_{i_h}$, by
$I_{[\m]}$ we mean the facet ideal of the induced subcollection
$\D_{[\m]}= \D_{[\{x_{i_1},\ldots, x_{i_h}\}]}$  or in other words
$$I_{[\m]}=\F(\D_{[\m]})=\F(\D_{[\{x_{i_1},\ldots, x_{i_h}\}]}).$$ 

If we set $\Gamma$ and $\Gamma'$ to be the Taylor
  complexes of $I$ and $I_{[\m]}$, respectively, then \cref{t:BS}
    indicates that
    \begin{equation}\label{e:induced}
      \beta_{i,\m}(S/I)=\beta_{i,\m}(S/I_{[\m]})
    \end{equation} as
    $\Gamma_{<\m}= \Gamma'_{<\m}$. For an integer $i$,
    \eqref{e:multigraded} and \eqref{e:induced} show that
$$\beta_{i}(S/I) \neq 0 \iff \beta_{i,\m}(S/I) \neq 0 \mbox{ for some
} \m \in \LCM(I) \iff \beta_{i}(S/I_{[\m]}) \neq 0 \mbox{ for some }
\m \in \LCM(I).$$ In the rightmost statement in the previous line, the
monomial $\m$ is at the ``top'' of the lcm lattice of $I_{[\m]}$.
  This useful observation tells us that questions about
    multidegree Betti numbers can be reduced to questions about ``top
    degree'' Betti numbers. In the same vein, the subadditivity
  question can always be rephrased as a ``top degree'' one. We state
  this version for square-free monomials below.

\begin{question}[{\bf Top degree subadditivity}]\label{q:topdegree}
  Suppose $I=(\m_1,\ldots,\m_q)$ is a square-free monomial ideal in a
  polynomial ring $S$, and $\lcm(\m_1,\ldots,\m_q)=x_{i_1}\cdots
  x_{i_r}$. Suppose $\beta_{i,r} (S/I)\neq 0$, and $a,b >0$ are such
  that $i=a+b$. Then can we show that $t_a+t_b \geq r =t_{a+b}$?
\end{question}

In fact, the ambient ring does not really matter, so for the sake of
simplicity we can assume in \cref{q:topdegree} that $r=n$ and
$x_{i_1}\cdots x_{i_r}=x_1\cdots x_n$. If $I$ is a square-free
monomial ideal in $S=k[\xs]$, then since there is only one monomial of degree $n$ in $\LCM(I)$, from \eqref{e:multigraded} we have $$\beta_{i,n}(S/I)=\beta_{i,x_1\cdots x_n}(S/I).$$ So from now on we will  start from this setting.

For a square-free monomial ideal $I$ in the variables
$x_1,\ldots,x_n$, two monomials $\m,\m' \in \LCM(I)$ are called {\bf
  lattice complements} if $\lcm(\m,\m')=x_1\cdots x_n$ and
$\gcd(\m,\m')  \notin I$. As a potential way to examine the subadditivity
property, the first author raised the following question in \cite{F1}
(see also~\cite{FS}):

\begin{question}[{\bf Betti numbers of lattice complements}~\cite{F1},~Question~1.1]\label{q:main} If $I$ is a square-free monomial ideal
	in variables $\xs$, and $\beta_{i,n} (S/I)\neq 0$, $a,b >0$ and
	$i=a+b$, are there complements $\m$ and $\m'$ in $\LCM(I)$ with
	$\beta_{a,\m}(S/I) \neq 0$ and $\beta_{b,\m'}(S/I) \neq 0$?
\end{question}
The existence of lattice complements will establish the subadditivity property of $I$ simply since
\begin{center}
	$t_a+t_b \geq \deg(\m)+ \deg(\m') \geq n=t_{a+b}$.
\end{center}

A well ordered cover seems to be a promising place to look for lattice
complements as the following example shows.

\begin{example}
	Let $\Delta=\F(I)=\langle xy,yz,xz,za,ab,bc \rangle$.
	\begin{center}
		\begin{tikzpicture}	
			\coordinate (x) at (-1,1);
			\coordinate (y) at (1,1);
			\coordinate (z) at (0,0);
			\coordinate (a) at (1.5,0);
			\coordinate (b) at (2.5,0);
			\coordinate (c) at (2.5,1);

			\draw [-,line width=1pt,color=teal](x)--(y);
			
			\draw [-,line width=1pt,color=teal](x)--(z);
			
			\draw (x)node[circle,fill,inner sep=1pt,label=above:$x$](x){};
			
			\draw (y)node[circle,fill,inner sep=1pt,label=above:$y$](y){};
			
			\draw (z)node[circle,fill,inner sep=1pt,label=below:$z$](z){};
			
			\draw (y)--(z);
			
			\draw (z)--(a);
			
			\draw [-,line width=1pt,color=teal](a)--(b);
			
			\draw [-,line width=1pt,color=teal](b)--(c);
			
			\draw (a)node[circle,fill,inner sep=1pt,label=below:$a$](a){};
			
			\draw (b)node[circle,fill,inner sep=1pt,label=below:$b$](b){};

			\draw (c)node[circle,fill,inner sep=1pt,label=above:$c$](c){};
			
		\end{tikzpicture}
	\end{center}

According to Macaulay2~\cite{M2}, $S/I$ has the following Betti table:
	$$\begin{array}{rllllll}
		&0 &1 &2 &3 &4\\
		\mbox{total}:&1 &6 &10 &7 &2\\
		0:&1 &. &. &. &.\\
		1:&. &6 &6 &1 &. \\
		2:&. &. &4 &6 &2
	\end{array}$$

	Here $\be_{4,xyzabc}(S/I)\neq 0$ and hence $t_4=6$. It is easy to check that $\mathcal{M}=\{ab,xy,bc,xz\}$ (highlighted above) is a well ordered cover. We consider the following two cases:
	
	\begin{enumerate}
		\item $a=1$ and $b=3$. Then, using the above Betti table $t_1=2$ and $t_3=5$. Here we have $t_4<t_1+t_3=7$. Define the monomials $\m,\m' \in \LCM(I)$ as $\lcm$ of some subsets of $\mathcal{M}$ as follows: 
		\begin{center}
			$\m=ab=\lcm(ab)$ and $\m'=bcxyz=\lcm(xy,bc,xz)$. 
		\end{center}
		
		\begin{minipage}{.2\textwidth}
			\begin{center}
				\begin{tikzpicture}	
					\coordinate (a) at (0,0);
					\coordinate (b) at (1,0);

					
					
					
					
					
					
					
					\draw (a)--(b);
					
					
					\draw (a)node[circle,fill,inner sep=1pt,label=below:$a$](a){};
					
					\draw (b)node[circle,fill,inner sep=1pt,label=below:$b$](b){};


					\node [below=1cm, align=flush center,text width=8cm] at (0.7,-1.3){$\D_{[\m]}$};
				\end{tikzpicture}
			\end{center}
		\end{minipage}
		\hspace{3cm}
		\begin{minipage}{.2\textwidth}
			\begin{center}
				\begin{tikzpicture}	
					\coordinate (x) at (-1,1);
					\coordinate (y) at (1,1);
					\coordinate (z) at (0,0);
					\coordinate (b) at (2.5,0);
					\coordinate (c) at (2.5,1);	
					
					\draw (x)--(y);
					
					\draw (x)--(z);
					
					\draw (x)node[circle,fill,inner sep=1pt,label=above:$x$](x){};
					
					\draw (y)node[circle,fill,inner sep=1pt,label=above:$y$](y){};
					
					\draw (z)node[circle,fill,inner sep=1pt,label=below:$z$](z){};
					
					\draw (y)--(z);
					
					
					
					\draw (b)--(c);
					
					
					\draw (b)node[circle,fill,inner sep=1pt,label=below:$b$](b){};

					\draw (c)node[circle,fill,inner sep=1pt,label=above:$c$](c){};

					\node [below=1cm, align=flush center,text width=8cm] at (0.5,0){$\D_{[\m^\prime]}$};
					
				\end{tikzpicture}
			\end{center}
		\end{minipage}

		Then, 
		$$\be_{1,ab}(S/I)=\dim_k \rhk{-1}{\emptyset}\neq
		0 \qand \be_{3,xyzbc}(S/I)=\dim_k
		\rhk{1}{\langle xy,yz,xz,bc\rangle}\neq 0.$$
		
		Note that $\lcm(\m,\m')=xyzabc$ and $\gcd(\m,\m')=b \notin
		I$. As a result, $\m$ and $\m'$ are lattice complements and
		$$t_1+t_3 = \deg(\m)+\deg(\m') >6 =t_4.$$ 
		
		\item $a=b=2$. From the Betti table  we have $t_2=4$ and $t_4<2t_2=8$.
		As in the above case, we take
		$$\m=abxy=\lcm(ab,xy) \qand  \m'=bcxz=\lcm(bc,xz).$$
		
		\begin{minipage}{.1\textwidth}
			
			\begin{tikzpicture}	
				\coordinate (x) at (-1,1);
				\coordinate (y) at (1,1);
				\coordinate (a) at (2,0);
				\coordinate (b) at (3,0);
				
				\draw (x)--(y);
				
				
				\draw (x)node[circle,fill,inner sep=1pt,label=above:$x$](x){};
				
				\draw (y)node[circle,fill,inner sep=1pt,label=above:$y$](y){};
				
				
				
				
				\draw (a)--(b);
				
				
				\draw (a)node[circle,fill,inner sep=1pt,label=below:$a$](a){};
				
				\draw (b)node[circle,fill,inner sep=1pt,label=below:$b$](b){};


				\node [below=1cm, align=flush center,text width=8cm] at (0.7,0){$\D_{[\m]}$};
			\end{tikzpicture}
			
		\end{minipage}
		\hspace{5cm}
		\begin{minipage}{.1\textwidth}
			
			\begin{tikzpicture}	
				\coordinate (x) at (-1,1);
				\coordinate (z) at (0,0);
				\coordinate (b) at (2.5,0);
				\coordinate (c) at (2.5,1);	
				
				
				\draw (x)--(z);
				
				\draw (x)node[circle,fill,inner sep=1pt,label=above:$x$](x){};
				
				
				\draw (z)node[circle,fill,inner sep=1pt,label=below:$z$](z){};
				
				
				
				
				\draw (b)--(c);
				
				
				\draw (b)node[circle,fill,inner sep=1pt,label=below:$b$](b){};

				\draw (c)node[circle,fill,inner sep=1pt,label=above:$c$](c){};

				\node [below=1cm, align=flush center,text width=8cm] at (1,0){$\D_{[\m^\prime]}$};
				
			\end{tikzpicture}
			
		\end{minipage}

		Here, we also have $\lcm(\m,\m')=xyzabc$ and $\gcd(\m,\m')=bx \notin I$. So $\m$ and $\m'$ are lattice complements, and we have 
		$$\be_{2,xyab}(S/I)=\dim_k \rhk{0}{\langle xy,ab \rangle}\neq 0 \qand
		\be_{2,xzbc}(S/I)=\dim_k \rhk{0}{\langle xz,bc\rangle}\neq 0.$$
		
		As a result,
		$$2t_2 = \deg(\m)+\deg(\m') > 6=t_4.$$
	\end{enumerate}
	
\end{example}

\begin{proposition}\label{p:complements}
	Let $\mathcal{M}=\{\m_1,\ldots,\m_s\}$ be a well ordered cover of a square-free monomial ideal $I$. Define for each $1 \leq a \leq s-1$ the monomials 
	\begin{center}
	  $\m=\lcm(\m_1,\ldots,\m_a)$ \hspace{3mm} and
          \hspace{3mm} $\m'=\lcm(\m_{a+1},\ldots,\m_s)$.
		\end{center}
	Then $\m$ and $\m'$ are lattice complements in $\LCM(I)$.
	\end{proposition}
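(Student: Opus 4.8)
The plan is to verify the two defining conditions of a lattice complement. Both $\m = \lcm(\m_1, \ldots, \m_a)$ and $\m' = \lcm(\m_{a+1}, \ldots, \m_s)$ are $\lcm$'s of subsets of the minimal generating set of $I$, hence lie in $\LCM(I)$, and
$$\lcm(\m, \m') = \lcm(\m_1, \ldots, \m_a, \m_{a+1}, \ldots, \m_s) = \lcm(\m_1, \ldots, \m_s) = x_1 \cdots x_n,$$
the last equality holding because $\{\m_1, \ldots, \m_s\}$ is a cover of $I$, so every variable divides some $\m_i$. (Here $s \geq 2$, as otherwise the range $1 \leq a \leq s-1$ is empty and there is nothing to prove.) It remains to show $\gcd(\m, \m') \notin I$, which I would prove by contradiction.

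Suppose $\gcd(\m, \m') \in I$, so some minimal generator $\m''$ of $I$ divides it; then $\m'' \mid \m = \lcm(\m_1, \ldots, \m_a)$ and $\m'' \mid \m' = \lcm(\m_{a+1}, \ldots, \m_s)$ simultaneously. In each case below the aim is to produce an index $k$ with $\m_k$ dividing $\lcm(\m_i \st i \in \{1,\ldots,s\} \setminus \{k\})$; such a relation means every variable dividing $\m_k$ already divides some other element of the cover, so $\mathcal{M} \setminus \{\m_k\}$ is still a cover of $I$, contradicting the minimality of $\mathcal{M}$. If $\m'' = \m_k$ lies in the cover, then $k \leq a$ forces $\m_k \mid \m' = \lcm(\m_{a+1}, \ldots, \m_s)$, while $k \geq a+1$ forces $\m_k \mid \m = \lcm(\m_1, \ldots, \m_a)$; in both cases $\m_k$ divides the $\lcm$ of cover elements distinct from itself, as desired.

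If instead $\m'' \notin \{\m_1, \ldots, \m_s\}$, then $\m''$ is a generator of $I$ outside the cover, so by \cref{d:woc} there is $j \leq s-1$ with $\m_j \mid \lcm(\m'', \m_{j+1}, \ldots, \m_s)$. When $j \leq a$ we have $\{\m_{a+1}, \ldots, \m_s\} \subseteq \{\m_{j+1}, \ldots, \m_s\}$, so $\m'' \mid \lcm(\m_{a+1}, \ldots, \m_s) \mid \lcm(\m_{j+1}, \ldots, \m_s)$, giving $\m_j \mid \lcm(\m_{j+1}, \ldots, \m_s)$; when $j \geq a+1$, instead $\m'' \mid \lcm(\m_1, \ldots, \m_a)$ yields $\m_j \mid \lcm(\m_1, \ldots, \m_a, \m_{j+1}, \ldots, \m_s)$. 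Either way $\m_j$ divides the $\lcm$ of cover elements other than $\m_j$, contradicting minimality. Hence no such $\m''$ exists, $\gcd(\m, \m') \notin I$, and $\m, \m'$ are lattice complements. The bookkeeping is routine; the one genuine ingredient is the well ordered cover hypothesis applied in this last case, and the only point requiring attention is the split on whether $j \leq a$ or $j \geq a+1$.
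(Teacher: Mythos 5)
Your proof is correct and follows essentially the same route as the paper's: assume $\gcd(\m,\m')\in I$, take a minimal generator dividing it, split on whether it belongs to $\mathcal{M}$ or not, and in the latter case apply the well ordered cover condition with a case analysis on the position of the index relative to $a$, deriving in every case a redundant element of $\mathcal{M}$ that contradicts minimality. The only (harmless) difference is that you also spell out the routine verification that $\lcm(\m,\m')=x_1\cdots x_n$, which the paper takes for granted.
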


\begin{proof} 	Suppose $I=(\m_1,\ldots,\m_s,\n_1,\ldots,
  \n_k)$ and $$\lcm(\m_1,\ldots,\m_s)=\lcm(\m,\m')=x_1 \cdots
  x_n.$$ Suppose that $\m$ and $\m'$ are not lattice complements, so
  $\gcd(\m,\m') \in I$. In particular, there is a generator
  $\q$ of $I$ such that $$\q \mid \lcm(\m_1,\ldots ,
  \m_a) \qand \q \mid \lcm (\m_{a+1},\ldots, \m_s).$$

  Suppose that $\q=\m_i$ for some $i=1,\ldots,s$. If $i \leq a$, then as
  $\m_i \mid \lcm (\m_{a+1},
  \ldots, \m_s)$ we must have 
	$$\lcm(\m_1, \ldots, \widehat{\m}_i, \ldots, \m_a, \m_{a+1},
  \ldots, \m_s)=\lcm(\m_1, \ldots, \m_s),$$ which contradicts
  $\mathcal{M}$ being a minimal cover of $I$.

  The case $i \geq a+1$ also leads to the same contradiction.  Suppose
  that $\q=\n_i$ for some $i\in \{1,\ldots,k\}$. Since
  $\mathcal{M}$ is a well ordered cover of $\Delta$, then there
  exists $\ell \leq s-1$ such that
	$$\m_\ell \mid \lcm (\n_i , \m_{\ell+1} , \ldots
  , \m_s.)$$

If $\ell \leq a$, then $\ell +1 \leq a+1$ and as $\n_i \mid \lcm(\m_{a+1}, \ldots, \m_s)$ we have
	$$\m_\ell \mid \lcm(\m_{\ell+1}, \ldots, \m_{a+1}, \m_{a+2}, \ldots, \m_s).$$
Therefore, $$\lcm(\m_1 , \ldots , \widehat{\m}_\ell , \ldots , \m_s)
=\lcm(\m_1 , \ldots , \m_s).$$
	
This also contradicts the minimality of the cover $\mathcal{M}$. If $\ell \geq a+1$, then since $\n_i \mid \lcm (\m_1 , \ldots , \m_a)$,
\begin{align}
\nonumber \m_\ell & \mid \lcm (\n_i , \m_{\ell+1} , \ldots , \m_s)\\
\nonumber & \mid \lcm (\m_1 , \ldots , \m_a , \m_{\ell+1} , \ldots , \m_s).
\end{align}
Similarly,
\begin{center}
	$\lcm(\m_1 , \ldots , \m_a , \ldots , \widehat{\m}_\ell , \m_{\ell+1} , \ldots , \m_s)= \lcm(\m_1 , \ldots , \m_s)$,
\end{center}
a contradiction. Thus, $\gcd(\m,\m') \notin I$ and hence $\m$ and $\m'$ are lattice complements in $I$, as required.
	\end{proof}

In fact more is true: the second half of the well
  ordered cover in \cref{p:complements} is itself a well ordered
  cover.

\begin{proposition}
	\label{p:secondpart}
	Let $\mathcal{M}=\{\m_1,\ldots,\m_s\}$ be a well ordered cover
        of a square-free monomial ideal $I$. Define for each $1 \leq a
        \leq s-1$ the monomials
        $$\m=\lcm(\m_1,\ldots,\m_a) \qand
        \m'=\lcm(\m_{a+1},\ldots,\m_s).$$ Then
	
	\begin{enumerate}
	
	\item $\{\m_1,\ldots,\m_a\}$ is a minimal cover of $I_{[\m]}$.
	\item $\{\m_{a+1},\ldots,\m_s\}$ is a well ordered cover of $I_{[\m']}$.
	\item $\be_{s-a,\m'}(S/I)\neq 0$.
	\end{enumerate}
\end{proposition}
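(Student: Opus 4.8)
The plan is to establish the three items in order, with (3) falling out of (2) together with \cref{t:Nursel}. Throughout, write $\D=\F(I)$ and recall that $I_{[\m]}$ is generated precisely by those generators of $I$ that divide $\m$. For (1), I would first observe that the vertex set of $\D_{[\m]}$ is exactly the set of variables dividing $\m$: every vertex lies in some facet $F\subseteq\{x_i\st x_i\mid\m\}$, and conversely the facets of $\D$ corresponding to $\m_1,\ldots,\m_a$ are facets of $\D_{[\m]}$ whose supports have union $\{x_i\st x_i\mid\m\}$ because $\m=\lcm(\m_1,\ldots,\m_a)$. Hence $\{\m_1,\ldots,\m_a\}$ covers $I_{[\m]}$. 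For minimality, if some $\m_i$ with $i\le a$ were redundant then $\lcm(\m_1,\ldots,\widehat{\m}_i,\ldots,\m_a)=\m$, whence $\lcm(\m_1,\ldots,\widehat{\m}_i,\ldots,\m_s)=\lcm(\m,\m')=x_1\cdots x_n$, contradicting that $\mathcal{M}$ is a minimal cover of $I$ --- the same mechanism as in the proof of \cref{p:complements}.

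For (2), minimality of $\{\m_{a+1},\ldots,\m_s\}$ as a cover of $I_{[\m']}$ is the argument of (1) with $\m$ and $\m'$ interchanged, so the real content is the well ordered condition. Let $\m''$ be a generator of $I_{[\m']}$ with $\m''\notin\{\m_{a+1},\ldots,\m_s\}$; then $\m''$ is a generator of $I$ dividing $\m'$. I would first rule out $\m''\in\{\m_1,\ldots,\m_a\}$: such an $\m''$ divides both $\m$ and $\m'$, hence divides $\gcd(\m,\m')$, forcing $\gcd(\m,\m')\in I$ and contradicting \cref{p:complements}. So $\m''\notin\{\m_1,\ldots,\m_s\}$, and since $\mathcal{M}$ is a well ordered cover of $I$ there is $\ell\le s-1$ with $\m_\ell\mid\lcm(\m'',\m_{\ell+1},\ldots,\m_s)$. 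It now suffices to show $\ell\ge a+1$, since then this inequality is precisely the well ordered condition for the sequence $\m_{a+1},\ldots,\m_s$. Suppose instead $\ell\le a$. Because $\m''\mid\m'=\lcm(\m_{a+1},\ldots,\m_s)$, the tail collapses: $\lcm(\m'',\m_{\ell+1},\ldots,\m_s)=\lcm(\m_{\ell+1},\ldots,\m_a,\m')$, so $\m_\ell\mid\lcm(\m_{\ell+1},\ldots,\m_a,\m')$. But then $\m_\ell$ is redundant in $\lcm(\m_1,\ldots,\m_a,\m')=\lcm(\m,\m')=x_1\cdots x_n$, yielding $\lcm(\m_1,\ldots,\widehat{\m}_\ell,\ldots,\m_s)=x_1\cdots x_n$ and again contradicting minimality of $\mathcal{M}$. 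Hence $\ell\ge a+1$.

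For (3), by (2) the sequence $\m_{a+1},\ldots,\m_s$ is a well ordered cover of $I_{[\m']}$ of size $s-a$ with $\lcm(\m_{a+1},\ldots,\m_s)=\m'$, so \cref{t:Nursel} gives $\be_{s-a,\m'}(S/I_{[\m']})\neq 0$; since $\m'\in\LCM(I)$, \eqref{e:induced} then gives $\be_{s-a,\m'}(S/I)=\be_{s-a,\m'}(S/I_{[\m']})\neq 0$. I expect the only real obstacle to be the well ordered condition in (2): one must show the witness index $\ell$ supplied by the well ordered cover of $I$ cannot lie in $\{1,\ldots,a\}$, and what makes this work is that $\m''\mid\m'$ lets one absorb the entire tail $\m_{a+1},\ldots,\m_s$ into $\m'$, after which $\m_\ell$ becomes superfluous in a list whose lcm is all of $x_1\cdots x_n$. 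The remaining steps are routine bookkeeping with supports and least common multiples.
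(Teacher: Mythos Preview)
Your proposal is correct and follows essentially the same route as the paper's own proof: both establish the covering and minimality of the two pieces by reducing to the minimality of $\mathcal{M}$, both invoke \cref{p:complements} to rule out $\m''\in\{\m_1,\ldots,\m_a\}$, and both handle the well ordered condition by showing that a witness index $\ell\le a$ would make $\m_\ell$ redundant in $\lcm(\m_1,\ldots,\m_s)$ once $\m''$ is absorbed into $\m'$. Part~(3) is derived identically via \cref{t:Nursel} and \eqref{e:induced}.
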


\begin{proof}
	Clearly, $\{\m_1,\ldots,\m_a\}$ and $\{\m_{a+1},\ldots,\m_s\}$ are covers of $I_{[\m]}$ and $I_{[\m']}$ respectively. Suppose that $\{\m_1,\ldots,\m_a\}$ is not minimal, i.e. there exists a proper subset 
	\begin{center}
		$\{\m_{i_1},\ldots,\m_{i_k}\} \subset \{\m_1,\ldots,\m_a\}$,
		\end{center}
	that is a cover of $I_{[\m]}$.  In particular,
        there exists $h\in \{1,\ldots,a\} \setminus
        \{i_1,\ldots,i_k\}$ such that $$\m_h \mid \lcm(\m_{i_1} ,
        \cdots , \m_{i_k}).$$ Then
	$$\lcm(\m_1 , \cdots , \widehat{\m}_h , \cdots ,
        \m_s)=\lcm(\m_1 , \cdots , \m_s).$$
        
	This contradicts the minimality of the cover
        $\mathcal{M}$. Thus, $\{\m_1,\ldots,\m_a\}$ is a minimal 
        cover of $I_{[\m]}$. Using a similar argument,
        $\{\m_{a+1},\ldots,\m_s\}$ is also a minimal cover of
        $I_{[\m']}$.
	
	In order to show that $\{\m_{a+1},\ldots,\m_s\}$ is a well
        ordered cover of $I_{[\m']}$, if $\n$ is a minimal generator of
        $I_{[\m']}$ such that $\n \notin
        \{\m_{a+1},\ldots,\m_s\}$, we need to find $a+1 \leq \ell \leq
        s-1$ such that
	\begin{center}
		$\m_\ell \mid \lcm(\n , \m_{\ell+1} , \cdots , \m_s)$.
		\end{center}
	By \cref{p:complements}, $\m$ and $\m'$ are lattice
        complements. As a result, $\n$ cannot be a minimal generator
        of $I_{[\m]}$; in particular, $\n \notin
        \{\m_1,\ldots,\m_a\}$. Since
        $\mathcal{M}=\{\m_1,\ldots,\m_s\}$ is a well ordered cover and
        $\n \notin \{\m_1,\ldots,\m_s\}$, then there exists $1 \leq
        \ell \leq s-1$ such that
	\begin{center}
		$\m_\ell \mid \lcm ( \n , \m_{\ell+1} , \cdots , \m_s)$.
		\end{center}
	Suppose that $\ell \leq a$, so $\ell +1 \leq a+1$. Since $\n$
        is a minimal generator of $I_{[\m']}$, $\n \mid \lcm (
        \m_{a+1} , \cdots , \m_s)$. Therefore,
	\begin{align}
	\nonumber \m_\ell &\mid \lcm ( \n , \m_{\ell+1} , \cdots , \m_{a+1} , \cdots , \m_s)\\
	\nonumber & \mid \lcm ( \m_{\ell+1} , \cdots , \m_s).	\end{align}
	Thus,
	\begin{center}
		$\lcm(\m_1 , \cdots , \widehat{\m}_\ell , \cdots , \m_s)=\lcm(\m_1 , \cdots , \m_s)$,
	\end{center}
contradicting the minimality of the cover $\mathcal{M}$. Hence $a+1
\leq \ell \leq s-1$ which makes $\{\m_{a+1},\ldots,\m_s\}$ a well
ordered cover of $I_{[\m']}$. Using \cref{t:Nursel},
$\be_{s-a,\m'}(S/I_{[\m']})\neq 0$, and hence $\beta_{s-a,\m'}(S/I) \neq
0$.
\end{proof}

\begin{example}
	If $\Delta=\F(I)=\langle xy,yz,xz,za,ab,bc \rangle$, then
        $\mathcal{M}=\{ab,xy,bc,xz\}$ is a well ordered cover.
	
	\begin{center}
		\begin{tikzpicture}	
		\coordinate (x) at (-1,1);
		\coordinate (y) at (1,1);
		\coordinate (z) at (0,0);
		\coordinate (a) at (1.5,0);
		\coordinate (b) at (2.5,0);
		\coordinate (c) at (2.5,1);

		\draw [-,line width=1pt,color=teal](x)--(y);
		
		\draw [-,line width=1pt,color=teal](x)--(z);
		
		\draw (x)node[circle,fill,inner sep=1pt,label=above:$x$](x){};
		
		\draw (y)node[circle,fill,inner sep=1pt,label=above:$y$](y){};
		
		\draw (z)node[circle,fill,inner sep=1pt,label=below:$z$](z){};
		
		\draw (y)--(z);
		
		\draw (z)--(a);
		
		\draw [-,line width=1pt,color=teal](a)--(b);
		
		\draw [-,line width=1pt,color=teal](b)--(c);
		
		\draw (a)node[circle,fill,inner sep=1pt,label=below:$a$](a){};
		
		\draw (b)node[circle,fill,inner sep=1pt,label=below:$b$](b){};

		\draw (c)node[circle,fill,inner sep=1pt,label=above:$c$](c){};
		
		\end{tikzpicture}
	\end{center}

	 Then, by \cref{p:secondpart} we have: $\be_{1,xz}(S/I)\neq 0$, $\be_{2,bcxz}(S/I)\neq 0$, $\be_{3,bcxyz}(S/I)\neq 0$ and $\be_{4,abcxyz}(S/I)\neq 0$.
	\end{example}

Under certain extra conditions, the first part of the well ordered
cover is also well ordered, which gives us the subadditivity property
in those cases.  Note that the subadditivity property in the case
$a=1$ below is also covered by a theorem of Herzog and
Srinivasan~\cite{HS}.

\begin{theorem}[{\bf Well ordered covers and subadditivity}]\label{t:main} 
  Let $\mathcal{M}=\{\m_1,\ldots,\m_s\}$ be a well ordered cover
  of a square-free monomial ideal $I$. Define for each $1 \leq a \leq
  s-1$ the monomials
		$$\m=\lcm(\m_1,\ldots,\m_a) \qand
  \m'=\lcm(\m_{a+1},\ldots,\m_s).$$
  Supposed one of the following conditions holds: 
  \begin{enumerate}
  \item $I_{[\m]}=(\m_1,\ldots,\m_a)$ (e.g.  when $a=1$); or
  \item $\gcd(\m,\m')=1$.
	\end{enumerate}
  
  Then $\m$ and $\m^\prime$ are lattice complements in $I=\F(\Delta)$ such that $\be_{a,\m}(S/I)\neq 0$. In particular 
$$t_s \leq t_a+ t_{s-a}.$$
	\end{theorem}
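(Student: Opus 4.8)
The plan is to read the statement as follows: \emph{granting} \cref{p:complements} and \cref{p:secondpart}, the only thing left to prove is that $\be_{a,\m}(S/I)\neq 0$, after which the inequality is a one-line consequence. Indeed, \cref{p:complements} already gives that $\m$ and $\m'$ are lattice complements, and \cref{p:secondpart} already gives $\be_{s-a,\m'}(S/I)\neq 0$. Assuming also $\be_{a,\m}(S/I)\neq 0$, we get $t_a\geq \deg\m$ and $t_{s-a}\geq \deg\m'$; since $\lcm(\m,\m')=x_1\cdots x_n$ we have $\deg\m+\deg\m'\geq n$; and applying \cref{t:Nursel} to $\mathcal{M}$ itself gives $\be_{s,x_1\cdots x_n}(S/I)\neq 0$, so $t_s=n$ (every $\lcm$ of generators divides $x_1\cdots x_n$, so no $\be_{s,j}$ with $j>n$ can be nonzero). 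Stringing these together yields $t_s=n\leq \deg\m+\deg\m'\leq t_a+t_{s-a}$.

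So I would concentrate entirely on showing $\be_{a,\m}(S/I)\neq 0$. By \eqref{e:induced} it suffices to show $\be_{a,\m}(S/I_{[\m]})\neq 0$, and by \cref{t:Nursel} for that it is enough to prove that the \emph{sequence} $\m_1,\ldots,\m_a$ is a well ordered cover of $I_{[\m]}$. By \cref{p:secondpart}(1) this sequence is already a minimal cover of $I_{[\m]}$, so I only need to verify the ordering condition of \cref{d:woc}, and this is where hypotheses (1) and (2) come in.

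Under hypothesis (1), $I_{[\m]}=(\m_1,\ldots,\m_a)$; since $\m_1,\ldots,\m_a$ lie among the minimal generators of $I$ they are pairwise non-dividing, hence they are precisely the minimal generating set of $I_{[\m]}$, and there is no generator $\n\notin\{\m_1,\ldots,\m_a\}$ to test — the ordering condition holds vacuously. Under hypothesis (2), $\gcd(\m,\m')=1$ together with $\lcm(\m,\m')=x_1\cdots x_n$ partitions the variables into the set $A$ of variables dividing $\m$ and the set $B$ of variables dividing $\m'$; then $\m_1,\ldots,\m_a$ are supported on $A$ and $\m_{a+1},\ldots,\m_s$ on $B$. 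Take a minimal generator $\n$ of $I_{[\m]}$ with $\n\notin\{\m_1,\ldots,\m_a\}$; note $\n$ is then also a minimal generator of $I$. Since $\n\mid\m$, it is supported on $A$, hence $\n\notin\{\m_{a+1},\ldots,\m_s\}$, so $\n\notin\mathcal{M}$, and the well ordered property of $\mathcal{M}$ provides $\ell\leq s-1$ with $\m_\ell\mid\lcm(\n,\m_{\ell+1},\ldots,\m_s)$. I would then rule out $\ell\geq a$: if $\ell\geq a+1$, looking only at the variables in $B$ (none of which divides $\n$) gives $\m_\ell\mid\lcm(\m_{\ell+1},\ldots,\m_s)$, so $\lcm(\m_1,\ldots,\widehat{\m}_\ell,\ldots,\m_s)=\lcm(\m_1,\ldots,\m_s)$, contradicting minimality of $\mathcal{M}$; if $\ell=a$, looking only at the variables in $A$ gives $\m_a\mid\n$, hence $\m_a=\n$ (both minimal generators of $I$), contradicting the choice of $\n$. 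Therefore $\ell\leq a-1$, and restricting $\m_\ell\mid\lcm(\n,\m_{\ell+1},\ldots,\m_s)$ to the variables of $A$ once more turns it into $\m_\ell\mid\lcm(\n,\m_{\ell+1},\ldots,\m_a)$, which is exactly the ordering condition for $\m_1,\ldots,\m_a$. Applying \cref{t:Nursel} finishes it.

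The routine parts are the square-free exponent bookkeeping and the ``drop a generator, contradict minimality'' moves, which already occur in the proofs of \cref{p:complements} and \cref{p:secondpart}. The one genuinely load-bearing point — and the main obstacle — is case (2): one must be sure that the index $\ell$ handed over by $\mathcal{M}$'s well ordered property actually lands in $\{1,\ldots,a-1\}$ rather than in $\{a,\ldots,s-1\}$, and the disjointness $A\cap B=\emptyset$ is precisely what forces this. Without the $\gcd(\m,\m')=1$ hypothesis there is no reason the ``good'' index should remain in the first block — this is why hypothesis (1) must be imposed as a separate alternative.
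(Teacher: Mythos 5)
Your proposal is correct and follows essentially the same route as the paper: use \cref{p:complements} and \cref{p:secondpart}, reduce everything to showing that $\m_1,\ldots,\m_a$ is a well ordered cover of $I_{[\m]}$ (trivial under hypothesis (1), and forced by the disjointness of supports under hypothesis (2)), then apply \cref{t:Nursel} and the complement inequality. Your case analysis for where the index $\ell$ can land, and the explicit check that $\n\notin\{\m_{a+1},\ldots,\m_s\}$, just spell out details the paper leaves implicit.
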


\begin{proof} By \cref{p:secondpart} we already know that  
  $\m_1,\ldots,\m_a$ is a minimal cover for $I_{[\m]}$. We claim that
  if either of the two conditions above hold, then it is a well
  ordered cover.  If $I_{[\m]}=(\m_1,\ldots,\m_a)$, then this is
  trivial, since there is no generator other than
  $\m_1,\ldots,\m_a$. Suppose $\gcd(\m,\m')=1$, and let $\n$ be in the
  minimal monomial generating set of $I_{[\m]}$ and $\n \notin
    \{\m_1,\ldots,\m_a\}$. Then, $\n$ is a minimal generator of $I$ as
    well. Moreover $\m_1,\ldots,\m_s$ is a well ordered cover of $I$,
    and so for some $j \leq s-1$ we have $\m_j \mid
    \lcm(\n,\m_{j+1},\ldots,\m_s)$. On the other hand, since $\n \mid
    \m$ and $\gcd(\m,\m')=1$, we must have $j \leq a-1$ and $$\m_j
    \mid \lcm(\n,\m_{j+1},\ldots,\m_a)$$ which implies that
    $\m_1,\ldots,\m_a$ is a well ordered cover of
    $I_{[\m]}$. Therefore, by \cref{t:Nursel}, $\beta_{a,\m}(S/I_{[\m]})
    \neq 0$, and hence $\beta_{a,\m}(S/I) \neq 0$.
  
    By \cref{p:secondpart} $\m_{a+1},\ldots,\m_s$ is a well ordered cover for $I_{[\m']}$ and $\beta_{s-a,\m'}(S/I_{[\m']}) \neq 0$, hence $\beta_{s-a,\m'}(S/I) \neq 0$.

    Now, since $\m$ and $\m'$ are complements (\cref{p:complements}),
    we have that $$t_s \leq t_a+ t_{s-a},$$ which ends our argument.
\end{proof}

\section{Simplicial Bouquets}\label{s:bouquets}

As proved in~\cite[Proposition~4.3]{EF2}, an example of a well ordered
facet cover for a simple graph is a \emph{strongly disjoint set of
  bouquets} (see~\cite[Definition 1.7]{Z} and
also~\cite[Definitions~2.1~and~2.3]{K}). Bouquets are in general much
easier to identify in graphs than well ordered edge covers, as one
does not need to worry about the order. In this section we define a
simplicial counterpart for a strongly disjoint set of bouquets of
graphs, and show that they form well ordered facet covers and hence
guarantee nonvanishing Betti numbers. Similar to graph bouquets,
simplicial bouquets are easy to spot in a picture, and strongly
disjoint sets of simplicial bouquets often come with more than one
guaranteed order. We then apply the results of the previous section to
examine the subadditivity property in the presence of such simplicial
bouquets. It must be noted that our definition of a simplicial bouquet
is very close to hypergraph bouquets developed in
\cite[Definition~3.1]{KhM}, and almost the same as the hypergraph
bouquets in~\cite[Definition~2.1]{E}.

\begin{definition}[{\bf Simplicial bouquet}]\label{d:bouquet} Let $\Delta$
  be a simplicial complex. A {\bf simplicial bouquet} is a
  subcollection $B=\langle G_1,\ldots, G_t \rangle$ of $\Delta$ such
  that each facet $G_i$ has at least one free vertex in $B$,
  and $$\bigcap_{i=1}^t G_i \neq \emptyset.$$ The nonempty
  intersection of the facets of $B$ is called the ${\bf root}$ of $B$
  and denoted by $\mbox{Root}(B)$.
\end{definition}

A simplicial bouquet in a graph coincides with the usual definition of
bouquets in graphs (see~\cite[Definition 1.7]{Z}).

    The {\bf distance} between two distinct facets $F,F^\prime$ of
    $\Delta$, denoted by $\dist_\Delta(F,F^\prime)$, is the minimum
    length $\alpha$ of sequences of facets of $\Delta$ $$F=F_0,
    F_1,\ldots,F_\alpha=F^\prime \qwhere F_{i-1} \cap F_i \neq
    \emptyset,$$ or $\infty$ if there is no such sequence. We say that
    $F$ and $F^\prime$ are {\bf $3$-disjoint} in $\Delta$ if
    $\dist_\Delta(F,F^\prime) \geq 3$. A subset $\mathcal{E} \subset
    \facets(\Delta)$ is said to be {\bf pairwise $3$-disjoint} if
    every pair of distinct facets $F,F^\prime \in \mathcal{E}$ are
    $3$-disjoint in $\Delta$ (see~\cite[Definitions 2.2 and 6.3]{HV}).

   \begin{definition}[{\bf (Strongly disjoint) set of
            bouquets}] Let $\Delta$ be a simplicial complex. For a set
        $\B=\{B_1,B_2,\dots,B_d\}$ of simplicial bouquets of $\Delta$, define
    $$\facets(\B)=\facets(B_1) \cup \cdots \cup \facets(B_d) \qand
        V(\B)=V(B_1) \cup \cdots \cup V(B_d).$$ Then $\B$ is called
        {\bf strongly disjoint} in $\Delta$ if:
    \begin{enumerate}
      \item $V(B_i) \cap V(B_j)=\emptyset$ for all $i \neq j$, and
      \item we can choose a facet $G_i$ from each $B_i \in \B$
    so that the set $\{G_1,\ldots,G_d\}$ is pairwise $3$-disjoint in
    $\Delta$.
    \end{enumerate}

    We say that $\Delta$ {\bf contains a strongly disjoint
      set of bouquets} if there exists a strongly disjoint set of
    bouquets $\B=\{B_1,\ldots,B_q\}$ of $\Delta$ such that:
    \begin{enumerate}
    \item $V(\Delta)=V(\B)$, and
    \item if $F \in \facets(\Delta)\setminus \facets(\B)$ and $F \cap
      G \neq \emptyset$ for some $G \in \facets(B_i)$ and $i \in
      \{1,\ldots,d\}$, then $\left ( G \smallsetminus \mbox{Root}(B_i)
      \right ) \subseteq F$.
    \end{enumerate}
\end{definition}

\begin{example}\label{e:earlier}
	Let $I=(abc,bcd,cdf,def,eg,fg,gh,hi,gi,fi,gx,gy)$ and
        $\Delta=\F(I)$. It is easy to check that $\Delta$ contains a
        strongly disjoint set of bouquets $\B=\{B_1,B_2\}$ where the
        bouquets $$B_1=\langle bcd, abc \rangle \qand B_2=\langle
        gy,gx,ge,gf,gh,gi \rangle$$ are highlighted below, and the set
        of facets $\{abc,gx\}$ is pairwise $3$-disjoint.
	
	\begin{center}
	\begin{tikzpicture}
	
	\coordinate (a) at (-3,0);
	\coordinate (b) at (-2,1);
	\coordinate (c) at (-1.5,0);
	\coordinate (d) at (-0.5,1);
	\coordinate (e) at (1,1);
	\coordinate (w) at (1,-1.5);
	\coordinate (f) at (0,0);
	\coordinate (g) at (3,1);
	\coordinate (i) at (2,-1);
	\coordinate (y) at (2,1.5);
	
	\coordinate (h) at (4,0);
	
	\coordinate (x) at (4,1);
	
	\draw (a)--(b)(a)--(c);

	\draw (g) node[circle,fill,inner sep=1pt,label=above:$g$](g){};
	\draw (h) node[circle,fill,inner sep=1pt,label=below:$h$](h){};
	
	\draw [-,line width=1.3pt,color=teal](g)--(h);
	
	\draw (i) node[circle,fill,inner sep=1pt,label=below:$i$](i){};
	\draw [-,line width=1.5pt,color=teal](g)--(i);
	\draw [-,line width=1.5pt,color=teal](g)--(f);
	\draw [-,line width=1.5pt,color=teal](g)--(e);
	
	\draw (x) node[circle,fill,inner sep=1pt,label=above:$x$](x){};
	
	\draw (y) node[circle,fill,inner sep=1pt,label=above:$y$](y){};
	
	\draw [-,line width=1.5pt,color=teal](g)--(x);
	
	\draw [-,line width=1.5pt,color=teal](g)--(y);

	\draw (i)--(h)(i)--(f);
	
	\draw [top color=teal!50,bottom color=teal!50](a)node[circle,fill,inner sep=1pt,label=below:$a$](a){}--(b)node[circle,fill,inner sep=1pt,label=above:$b$](b){}--(c)--(a);
	
	\draw [top color=teal!50,bottom color=teal!50](b)--(c)--(d)--(b);
	
	\draw [top color=gray!20,bottom color=gray!20](d)--(c)node[circle,fill,inner sep=1pt,label=below:$c$](c){}--(f)--(d)node[circle,fill,inner sep=1pt,label=above:$d$](d){};
	
	\draw [top color=gray!20,bottom color=gray!20](d)--(f)node[circle,fill,inner sep=1pt,label=below:$f$](f){}--(e)node[circle,fill,inner sep=1pt,label=above:$e$](e){}--(d);

	\draw (w)node[label=below:$\Delta$](w){};
	\end{tikzpicture}  
	\end{center}

\end{example}

    The following statement is a generalization
    of~\cite[Proposition~4.3]{EF2}.
    
\begin{theorem}[{\bf Strongly disjoint set of bouquets and well ordered
facet covers}]
	\label{t:bouquetsstrongly}
	Let $\Delta$ be a simplicial complex which contains a
        strongly disjoint set of bouquets $\B=\{B_1,\ldots,B_d\}$. For
        each $q \in \{1,\ldots,d\}$ suppose
          $$\facets(B_q)=\{G_1^q,G_2^q,\ldots,G_{b_q}^q,G_q\}$$ where
        $\{G_1,\ldots,G_d\}$ is pairwise $3$-disjoint in
        $\Delta$. Then for any permutation $k_1,k_2,\ldots,k_d$ of the
        integers $1,\ldots,d$, the sequence of facets
			$$\underbrace{G_1^{k_1} \; , \; \ldots \; , \;
          G_{b_{k_1}}^{k_1} \; , \; G_{k_1}}_{\facets(B_{k_1})} \; , \;
        \underbrace{G_1^{k_2} \; , \; \ldots \; , \; G_{b_{k_2}}^{k_2} \; , \;
          G_{k_2}}_{\facets(B_{k_2})} \; , \; \ldots \; , \; \underbrace{G_1^{k_d} \; , \;
          \ldots \; , \; G_{b_{k_d}}^{k_d} \; , \; G_{k_d}}_{\facets(B_{k_d})}$$ form a
        well ordered facet cover of $\Delta$. 
	\end{theorem}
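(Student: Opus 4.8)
The plan is to verify the two defining conditions of a well ordered facet cover (\cref{d:wofc}) — that the displayed list is a minimal facet cover of $\Delta$, and that for every facet $F'$ not in the list there is a $j\le s-1$ with $F_j\subseteq F'\cup F_{j+1}\cup\cdots\cup F_s$ — using only the three hypotheses packaged into ``$\Delta$ contains a strongly disjoint set of bouquets''. First I would record the one structural fact about the ordering that the argument needs: writing $\facets(\B)=\bigcup_q\facets(B_q)$, the sets $\facets(B_q)$ are pairwise disjoint (distinct bouquets have disjoint vertex sets, and every bouquet facet is nonempty), so the displayed sequence is just an enumeration of $\facets(\B)$ in which, \emph{for each $q$}, the non-root facets $G_1^q,\dots,G_{b_q}^q$ all precede the distinguished facet $G_q$. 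Since this is the only property of the order I will invoke, and it does not depend on the outer permutation $k_1,\dots,k_d$, the statement follows for every permutation at once.

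For the cover condition I would use that $V(\Delta)=V(\B)$: each vertex of $\Delta$ lies in a facet of some $B_q$, hence in $\facets(\B)$. For minimality I would use that every facet $G$ of every bouquet $B_q$ has a free vertex $v$ in $B_q$ (\cref{d:bouquet}); then $v$ lies in no other facet of $B_q$, and, by vertex-disjointness of the bouquets, in no facet of any $B_{q'}$ with $q'\neq q$, so $v$ is covered by $G$ alone within $\facets(\B)$ and deleting $G$ breaks the cover.

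The heart of the proof is the well-ordering condition, so let $F'$ be a facet of $\Delta$ not among the listed facets. The key claim I would prove is that $F'$ meets some \emph{non-root} facet $G_i^q$ (with $1\le i\le b_q$) of some bouquet. Assuming otherwise: each vertex of $F'$ lies in $V(\B)$, hence in a facet of some $B_q$, and under the assumption every facet of $B_q$ that $F'$ meets is the distinguished one $G_q$, so $F'\cap V(B_q)\subseteq G_q$ for each $q$. Writing $Q=\{q : F'\cap V(B_q)\neq\emptyset\}$ (nonempty, as $F'\neq\emptyset$), this gives $F'\subseteq\bigcup_{q\in Q}G_q$. If $|Q|\ge 2$, then for distinct $q,q'\in Q$ the facet $F'$ meets both $G_q$ and $G_{q'}$, so $\dist_\Delta(G_q,G_{q'})\le 2$, contradicting the pairwise $3$-disjointness of $\{G_1,\dots,G_d\}$; hence $|Q|=1$ and $F'\subseteq G_q$ for a single $q$, forcing $F'=G_q$ by maximality of $F'$ — but $G_q\in\facets(\B)$, a contradiction. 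With the claim in hand, the defining property of ``$\Delta$ contains a strongly disjoint set of bouquets'' yields $G_i^q\setminus\mbox{Root}(B_q)\subseteq F'$, and since $\mbox{Root}(B_q)\subseteq G_q$ we get $G_i^q\subseteq F'\cup G_q$. Setting $F_j:=G_i^q$, the facet $G_q$ occurs strictly later in the sequence (it closes the block of $B_q$), so $F_j$ is not the final facet and $F_j\subseteq F'\cup F_{j+1}\cup\cdots\cup F_s$, which is exactly what \cref{d:wofc} demands.

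The main obstacle is precisely the key claim in the last paragraph — that an outside facet cannot meet only distinguished facets — because it is the single point at which vertex coverage $V(\Delta)=V(\B)$, the $3$-disjointness of the chosen facets $\{G_1,\dots,G_d\}$, and the maximality of facets all have to be combined; everything before and after it is bookkeeping together with the one-line inclusion $\mbox{Root}(B_q)\subseteq G_q$.
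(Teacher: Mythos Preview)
Your proposal is correct and follows essentially the same route as the paper: both establish the minimal cover property from $V(\Delta)=V(\B)$ together with the free vertices in each bouquet, then show an outside facet $F'$ must meet some $G_i^q$ with $i\le b_q$ (the paper argues directly that $F'$ has a vertex outside $\bigcup_q G_q$, you phrase it as a contradiction using $|Q|$), and finish via $G_i^q\subseteq F'\cup G_q$ using $\mbox{Root}(B_q)\subseteq G_q$. Your write-up is in fact a bit more explicit than the paper's at the one delicate step --- you spell out the maximality argument that rules out $F'\subseteq G_q$, which the paper leaves implicit in the line ``so at least one vertex of $F$ does not belong to $\bigcup_{i=1}^d G_{k_i}$''.
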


\begin{proof} Observe that we can identify each monomial generator
  $x_1\ldots x_a$ of $\F(\Delta)$ with the facet $\{x_1,\ldots, x_a\}$
  of $\Delta$. The condition that $V(\Delta)=V(\B)$ and that every
  facet of $\B$ has a free vertex guarantees that $\facets(\B)$ is a
  minimal facet cover of $\Delta$.  Suppose $F \in \facets(\Delta)
  \setminus \facets(\B)$. Since the set $\{G_{k_1},\ldots,G_{k_d}\}$
  is pairwise $3$-disjoint in $\Delta$, $F$ can intersect at most one
  of $\{G_{k_1},\ldots,G_{k_d}\}$, and so at least one vertex of $F$
  does not belong to $\bigcup_{i=1}^d G_{k_i}$. Therefore for some $q
  \in \{1,\ldots,d \}$ and $j \in \{1, \ldots,b_{k_q}\}$ we have
  $G_j^{k_q} \cap F \neq \emptyset$ and so $$F \supseteq G_j^{k_q} \smallsetminus
  \mbox{Root}(B_{k_q}).$$   Hence
  $$G_j^{k_q} = (G_j^{k_q} \smallsetminus
  \mbox{Root}(B_{k_q})) \cup
  \mbox{Root}(B_{k_q}) \subseteq F \cup G_{k_q} \subseteq
  F \cup G_{j+1}^{k_q} \cup \cdots \cup G_{b_{k_q}}^{k_q} \cup G_{k_q}
  \cup G_1^{k_{q+1}} \cup \cdots \cup G_{k_d}$$ which implies that the
  sequence above is a well ordered facet cover of
  $\Delta$.
	\end{proof}

Equivalently, strongly disjoint simplicial bouquets
  produce well ordered covers for facet ideals.

  \begin{example} For the ideal in \cref{e:earlier},
    \cref{t:bouquetsstrongly} states that both
 $$bcd, abc, gy,ge,gf,gh,gi,gx \qand gy,ge,gf,gh,gi,gx, bcd, abc$$ are
    well ordered covers. Note that there are many other
      options for well ordered covers, obtained by reordering all
      facets (except for the last one) of each bouquet, or picking
      another set of pairwise $3$-disjoint facets from the set of
      bouquets.
  \end{example}

The following is a generalization of~\cite[Theorem
    3.1]{K}. From here onwards we assume $S$ is a polynomial ring over
a field generated by variables which are vertices of the simplicial
complex $\Delta$.

\begin{corollary}[{\bf Betti numbers from simplicial bouquets}] 
	\label{c:nonzeroBetti}
	Let $\Delta$ be a simplicial complex, $W \subseteq V(\Delta)$
        and suppose the induced subcollection $\Delta_{[W]}$ contains
        strongly disjoint set of bouquets $\B$ with $|\facets(\B)|=i$
        and $|W|=|V(\B)|=j$. Then
        $\beta_{i,j}(S/\F(\Delta))\neq 0$.
\end{corollary}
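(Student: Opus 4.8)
The plan is to reduce the statement of \cref{c:nonzeroBetti} directly to the two results already established, namely \cref{t:bouquetsstrongly} and \cref{t:Nursel}, by passing to the induced subcollection. First I would set $\Delta' = \Delta_{[W]}$ and $I' = \F(\Delta')$. By hypothesis, $\Delta'$ contains a strongly disjoint set of bouquets $\B = \{B_1,\ldots,B_d\}$ with $V(\Delta') = V(\B) = W$, so $|V(\Delta')| = j$. By \cref{t:bouquetsstrongly}, any admissible ordering of $\facets(\B)$ (say, the one attached to a chosen pairwise $3$-disjoint family $\{G_1,\ldots,G_d\}$) is a well ordered facet cover of $\Delta'$; translating to the facet ideal, this gives a well ordered cover $\M$ of $I'$ consisting of exactly the $i = |\facets(\B)|$ monomials corresponding to the facets in $\facets(\B)$.

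Next I would apply \cref{t:Nursel} to the well ordered cover $\M$ of $I'$: it yields $\beta_{i,\m}(S/I') \neq 0$, where $\m = \lcm(\M)$ is the product of the vertices appearing in the facets of $\B$. Because $V(\B) = W$ and every vertex of $\Delta'$ lies in some facet of $\B$ (as $\facets(\B)$ is a facet cover of $\Delta'$), we get $\m = \prod_{x \in W} x$, a square-free monomial of degree $j$. So $\beta_{i,j}(S/I') = \beta_{i,\m}(S/I') \neq 0$, using that $\m$ is the unique monomial of degree $j$ in $\LCM(I')$ (the argument recorded just after \cref{q:topdegree}).

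Finally I would lift this back to $\Delta$. Observe that $\Delta' = \Delta_{[W]} = \Delta_{[\m]}$ in the notation of \cref{s:subadditivity}, so $I' = I_{[\m]}$ where $I = \F(\Delta)$. By the identity \eqref{e:induced}, $\beta_{i,\m}(S/I) = \beta_{i,\m}(S/I_{[\m]}) = \beta_{i,\m}(S/I') \neq 0$. Then \eqref{e:multigraded} gives $\beta_{i,j}(S/\F(\Delta)) \geq \beta_{i,\m}(S/\F(\Delta)) \neq 0$, which is the claim.

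I do not expect any real obstacle here: the corollary is essentially a repackaging, and the only points that need care are (a) checking that $\lcm$ of the bouquet facets really equals $\prod_{x\in W} x$ — which follows from $\facets(\B)$ being a facet cover of $\Delta_{[W]}$ together with $V(\B)=W$ — and (b) making sure the ambient ring issue is handled, i.e.\ that the variables not in $W$ do not interfere, which is exactly what \eqref{e:induced} is designed to address. The mild subtlety worth stating explicitly is that the well ordered cover of $I_{[\m]}$ produced by \cref{t:bouquetsstrongly} is a well ordered cover of the \emph{subideal}, and \cref{t:Nursel} is applied in that smaller ring before transporting the nonvanishing back to $S$ via $\Gamma_{<\m} = \Gamma'_{<\m}$.
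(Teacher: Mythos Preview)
Your proposal is correct and follows essentially the same route as the paper: reduce to the induced subcollection $\Delta_{[W]}$ via \eqref{e:induced}, then apply \cref{t:bouquetsstrongly} to obtain a well ordered cover and \cref{t:Nursel} to conclude nonvanishing. The paper's proof is simply a terser version of what you wrote, invoking the discussion in \cref{s:subadditivity} in place of your explicit unpacking of $\m=\prod_{x\in W}x$ and the lift back to $I$.
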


\begin{proof} As discussed in \cref{s:subadditivity},
  $\beta_{i,j}(S/\F(\Delta))\neq 0$ if and only if
  $\beta_{i,j}(S/\F(\Delta_{[W]}))\neq 0$ for some $W \subseteq
  V(\Delta)$ with $|W|=j$. The statement is now a direct consequence
  of \cref{t:Nursel,t:bouquetsstrongly}.
\end{proof}

\begin{theorem}[{\bf Subadditivity from simplicial bouquets}]
	\label{t:bouquets}
	Let $\Delta$ be a simplicial complex and
        $I=\F(\Delta)$. Suppose $\Delta$ contains a strongly disjoint
        set of bouquets $\B$.  Let $\B=\B' \sqcup \B''$ be a partition
        of $\B$ into disjoint subsets $\B'$ and $\B''$, and
        let $$b'=|\facets(\B')|, \quad b''=|\facets(\B'')|, \qand
        b=b'+b''=|\facets(\B)|.$$ Then the monomials $$\m =\prod_{x
          \in V(\B')} x \qand \m'=\prod_{x \in V(\B'')}x $$ are
        lattice complements in $LCM(I)$ and
        $$\beta_{b',\m}(S/I)\neq 0 \qand \beta_{b'',\m'}(S/I)\neq 0$$
        In particular, $$t_{b} \leq t_{b'} + t_{b''}.$$
\end{theorem}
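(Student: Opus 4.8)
The strategy is to reduce \cref{t:bouquets} to \cref{t:main} by picking the \emph{right} well ordered cover of $I$ coming from the strongly disjoint set of bouquets. First I would write $\B=\{B_1,\ldots,B_d\}$ and relabel so that $\B'=\{B_1,\ldots,B_e\}$ and $\B''=\{B_{e+1},\ldots,B_d\}$ for some $e$. Using \cref{t:bouquetsstrongly} applied with the permutation $(k_1,\ldots,k_d)=(1,2,\ldots,d)$, the concatenation
$$\underbrace{G_1^1,\ldots,G_{b_1}^1,G_1}_{\facets(B_1)},\ldots,\underbrace{G_1^e,\ldots,G_{b_e}^e,G_e}_{\facets(B_e)},\underbrace{G_1^{e+1},\ldots,G_{e+1}}_{\facets(B_{e+1})},\ldots,\underbrace{G_1^d,\ldots,G_d}_{\facets(B_d)}$$
is a well ordered facet cover of $\Delta$; translating via $\F$ gives a well ordered cover $\mathcal{M}=\{\m_1,\ldots,\m_s\}$ of $I$ with $s=b=b'+b''$. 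Set $a=b'$, so that the first $a$ entries of $\mathcal{M}$ are exactly the facet-monomials of $\facets(\B')$ and the last $s-a=b''$ entries are those of $\facets(\B'')$.

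Next I would identify the monomials $\m=\lcm(\m_1,\ldots,\m_a)$ and $\m'=\lcm(\m_{a+1},\ldots,\m_s)$ from \cref{t:main} with the monomials in the statement: because $\facets(\B')$ is a facet cover of the induced subcollection on $V(\B')$ and every vertex of $V(\B')$ lies in some facet of $\B'$, we get $\m=\prod_{x\in V(\B')}x$, and similarly $\m'=\prod_{x\in V(\B'')}x$. The crucial point is that $V(B_i)\cap V(B_j)=\emptyset$ for $i\neq j$ forces $V(\B')\cap V(\B'')=\emptyset$, hence $\gcd(\m,\m')=1$, so condition (2) of \cref{t:main} is satisfied. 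Applying \cref{t:main} with this $a$ then yields that $\m$ and $\m'$ are lattice complements in $\LCM(I)$, that $\beta_{a,\m}(S/I)=\beta_{b',\m}(S/I)\neq 0$, and — via the $\beta_{s-a,\m'}$ conclusion of \cref{p:secondpart}/\cref{t:main} — that $\beta_{b'',\m'}(S/I)\neq 0$. Finally, the inequality $t_b=t_s\leq t_a+t_{s-a}=t_{b'}+t_{b''}$ is the last assertion of \cref{t:main}; alternatively, it follows directly from $t_{b'}+t_{b''}\geq \deg(\m)+\deg(\m')=|V(\B')|+|V(\B'')|=|V(\B)|=j=t_b$, using that $\beta_{b,j}(S/I)\neq 0$ by \cref{c:nonzeroBetti}.

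\textbf{Main obstacle.} The only non-formal step is checking that the concatenated sequence produced by \cref{t:bouquetsstrongly} really has its initial segment of length $b'$ equal to $\facets(\B')$ and that the induced subcollections match up so that $\m$ and $\m'$ are precisely the claimed squarefree monomials; this is a bookkeeping matter of choosing the permutation so that $\B'$'s bouquets come first and then noting that the vertex sets of the two halves are disjoint. One subtlety worth a sentence: to invoke \cref{t:main} we need that $\mathcal{M}$ is a well ordered cover of $I$ itself (not merely of some induced subcollection), which is exactly what \cref{t:bouquetsstrongly} delivers since $V(\Delta)=V(\B)$; and the hypothesis ``$\Delta$ contains a strongly disjoint set of bouquets'' is what guarantees this equality together with the absorption condition on outside facets. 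After that, everything is a direct citation of \cref{t:main}, \cref{p:complements}, and \cref{p:secondpart}.
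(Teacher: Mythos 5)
Your proposal is correct and follows essentially the same route as the paper: order the bouquets of $\B'$ before those of $\B''$, invoke \cref{t:bouquetsstrongly} to get a well ordered cover of $I$ of length $b$ whose initial segment is $\facets(\B')$, observe that vertex-disjointness of the bouquets gives $\gcd(\m,\m')=1$, and then apply \cref{t:main} (with \cref{p:complements} and \cref{p:secondpart}) to conclude. Your extra bookkeeping, including the identification $\m=\prod_{x\in V(\B')}x$ and $\m'=\prod_{x\in V(\B'')}x$, only makes explicit what the paper's two-line proof leaves implicit.
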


  \begin{proof} As any bouquet of $\B$ is either in $\B'$ or
  in $\B''$, and no two distinct bouquets share any vertices,
  $\gcd(\m,\m')=1$.  The claim now follows directly from \cref{t:main}
  and \cref{t:bouquetsstrongly}.
  \end{proof}

\begin{example}
	\label{e:runexample}
	Let $I=(ax,ay,bz,bv,bw,cu,cg,yz,az)$ and $G=\F(I)$. It is easy
        to check that $\B=\{\langle ax,ay\rangle, \langle bz,bv,bw
        \rangle,\langle cu,cg \rangle \}$ (highlighted below) is a
        strongly disjoint set of bouquets in $G$ where $\{ax,bv,cu\}
        \subset \facets(\B)$ is a set of pairwise $3$-disjoint in $G$.
	
	\begin{center}
		\begin{tikzpicture}

			\coordinate (a) at (0,0);
			\coordinate (x) at (-1,1);
			\coordinate (y) at (0.5,1);
			\coordinate (b) at (3,0);
			\coordinate (z) at (2,1);
			\coordinate (v) at (3,1);
			\coordinate (w) at (4,1);
			\coordinate (c) at (6,0);
			\coordinate (u) at (5,1);
			\coordinate (g) at (6,1);
			
			
			
			\draw (a)--(x)(a)--(y)(b)--(z)(b)--(v)(b)--(w)(c)--(u)(c)--(g)(y)--(z)(a)--(z);

			\draw (a) node[circle,fill,inner sep=1pt,label=below:$a$](r){};
			\draw (x) node[circle,fill,inner sep=1pt,label=above:$x$](x){};
			\draw (y) node[circle,fill,inner sep=1pt,label=above:$y$](y){};
			\draw (z) node[circle,fill,inner sep=1pt,label=above:$z$](z){};
			
			\draw (v) node[circle,fill,inner sep=1pt,label=above:$v$](v){};
			
			\draw (w) node[circle,fill,inner sep=1pt,label=above:$w$](w){};
			
			\draw (u) node[circle,fill,inner sep=1pt,label=above:$u$](u){};
			
			\draw (g) node[circle,fill,inner sep=1pt,label=above:$g$](g){};
			
			\draw (b) node[circle,fill,inner sep=1pt,label=below:$b$](b){};
			
			\draw (c) node[circle,fill,inner sep=1pt,label=below:$c$](c){};

			
			\draw [-,line width=1.5pt,color=teal](a)--(x);
			\draw [-,line width=1.5pt,color=teal](a)--(y);
			\draw [-,line width=1.5pt,color=teal](b)--(z);
			\draw [-,line width=1.5pt,color=teal](b)--(v);
			\draw [-,line width=1.5pt,color=teal](b)--(w);
			\draw [-,line width=1.5pt,color=teal](c)--(u);
			\draw [-,line width=1.5pt,color=teal](c)--(g);
			
			
			

			
			
			
			
			
		\end{tikzpicture}  
	\end{center}
        According to \cref{c:nonzeroBetti} (or (\cite[Theorem 3.1]{K}
          since this is the case of bouquets in graphs)
        $\beta_{7,10}(S/I)\neq 0$, so that $t_7=10$. Using the
        following order of $\B$ $$\underbrace{ay\; , \;
          ax}_{\facets(\langle ax,ay \rangle)}\; , \; \underbrace{bz\;
          , \; bw\; , \;bv}_{\facets(\langle bz,bv,bw \rangle)}\; , \;
        \underbrace{cg\; , \;cu}_{\facets(\langle cu,cg \rangle)}$$
        and setting $\m=axy=\lcm(ay,ax)$ and
        $\m^\prime=bzvwcug=\lcm(bz,bw,bv,cg,cu)$, by \cref{t:bouquets} we have
        $\beta_{2,\m}(S/I) \neq 0$ and $\beta_{5,\m^\prime}(S/I) \neq
        0$. On the other hand, using the
        following reordering of $\B$
        $$\underbrace{ay\; , \; ax}_{\facets(\langle ax,ay \rangle)}\;
        , \;\underbrace{cg\; , \;cu}_{\facets(\langle cu,cg
          \rangle)}\; , \;\underbrace{bz\; , \; bw\; , \;
          bv}_{\facets(\langle bz,bv,bw \rangle)}$$ and setting
        $\m=acxyug=\lcm(ay,ax,cg,cu)$ and
        $\m^\prime=bzvw=\lcm(bz,bw,bv)$, by \cref{t:bouquets}
          we also get $\beta_{4,\m}(S/I) \neq 0$ and
        $\beta_{3,\m^\prime}(S/I) \neq 0$. As a result,
        $$t_7<t_2+t_5=12 \qand t_7<t_3+t_4=13$$
        which can be confirmed by using the Betti table for $S/I$.

$$\begin{array}{rllllllllll}
	&0 &1 &2 &3 &4 &5 &6 &7 \\
	\mbox{total}:&1 &9 &28 &44 &40 &22 &7  &1\\
	0:&1 &. &. &. &. &. &. &. \\
	1:&. &9 &10 &3 &. &. &. &.  \\
	2:&. &. &18 &33 &20 &4 &. &.  \\
	3:&. &. &. &8 &20 &18 &7  &1
\end{array}$$

	\end{example}

\begin{example} 
	\label{e:runexample-bouquets}For $I$ and  $\B$ in \cref{e:earlier},  since
  $|\facets(B_1)|=2$ and $|\facets(B_2)|=6$,  and the two monomials
  $abcd$ and $efghixy$ are complements in $\LCM(I)$, it follows that $\beta_{6,7}(S/I)
  \neq 0$ and $\beta_{2,4}(S/I) \neq 0$  so that $t_{8} \leq t_2 +
  t_6$.
\end{example}

\section{Reordering well ordered covers}\label{s:reordering}

As we saw in the \cref{s:subadditivity}, the tail end of every well
ordered cover is itself a well ordered cover, and therefore guarantees
a nonvanishing Betti number in a certain homological degree. For
example, let
\begin{equation}\label{e:cover}  \m_1,\ldots,\m_s 
\end{equation}
be a well ordered cover of $I$.  As a result of \cref{p:secondpart} we
have that
\begin{equation}\label{e:first}
  \beta_{i,\m'}(S/I) \neq 0 \quad \mbox{for} \quad 1 \leq i \leq s
  \mbox{ and } \m'=\lcm(\m_{s-i+1},\ldots, \m_s).
\end{equation}

On the other hand, we also know by \cref{p:complements} that for a
fixed $i$, if $\m=\lcm(\m_1,\ldots, \m_{s-i})$ and $\m'$ is as in
\eqref{e:first}, then $\m$ and $\m'$ are complements. So, if we
additionally have
\begin{equation}\label{e:second}
  \beta_{s-i,\m}(S/I)\neq 0
\end{equation}
then \eqref{e:first} and \eqref{e:second} together would imply
that
\begin{equation}\label{e:third}
  t_s \leq t_i+t_{s-i}.
\end{equation}

In this section we consider extra conditions under which
\eqref{e:second} would hold. One particular situation is when the
monomials $\m_1,\ldots, \m_{s-i}$ in the well ordered cover in
\eqref{e:cover} could be shifted to the end of the ordering, so
that $$\m_{s-i+1},\ldots, \m_s, \m_1,\ldots, \m_{s-i}$$ is also a well
ordered cover of $I$. In this case, by \cref{p:secondpart} we would have
\eqref{e:second} automatically, and therefore more cases of the subadditivity
inequality \eqref{e:third}  could easily follow.

\begin{example}\label{e:modification}
Consider the ideal $I$ and $\Delta=\F(I)$ as in
  \cref{e:earlier} and consider the well ordered cover

\begin{center}
	\def\mytext{$\mathcal{M}:$ \hspace{5mm}\stackanchor{$\m_1$}{$gy$}\hspace{3mm} $,$ \hspace{3mm}\stackanchor{$\m_2$}{$gx$} \hspace{3mm}$,$\hspace{3mm}\stackanchor{$\m_3$}{$ge$}\hspace{3mm}$,$ \hspace{3mm}\stackanchor{$\m_4$}{$gf$}\hspace{3mm}$,$ \hspace{3mm}\stackanchor{$\m_5$}{$bcd$}\hspace{3mm}$,$ \hspace{3mm}\stackanchor{$\m_6$}{$gh$}\hspace{3mm}$,$ \hspace{3mm}\stackanchor{$\m_7$}{$gi$}\hspace{3mm}$,$\hspace{3mm}\stackanchor{$\m_8$}{$abc$}}
	\mytext
\end{center}

Name $\n_1=cdf$, $\n_2=def$, $\n_3=fi$ and
$\n_4=hi$ and for each $k=1,2,3,4$, set $$\alpha_k=\max \{ j \st \m_j
\mid \lcm (\n_k , \m_{j+1} , \cdots , \m_8)\}.$$ Then
$\alpha_1=\alpha_2=5$, $\alpha_3=4$ and $\alpha_4=6$. Let
$\ell=\min(\alpha_1,\alpha_2,\alpha_3,\alpha_4)=4$. Using $\ell$, we
can modify the order in $\mathcal{M}$ as follows:

 \begin{center}
 	\def\mytext{$\mathcal{M}':$ \hspace{5mm}\stackanchor{$\m_4$}{$gf$}\hspace{3mm}$,$ \hspace{3mm}\stackanchor{$\m_5$}{$bcd$} \hspace{3mm}$,$\hspace{3mm}\stackanchor{$\m_6$}{$gh$}\hspace{3mm}$,$ \hspace{3mm}\stackanchor{$\m_7$}{$gi$}\hspace{3mm}$,$ \hspace{3mm}\stackanchor{$\m_8$}{$abc$}\hspace{3mm}$,$ \hspace{3mm}\stackanchor{$\m_1$}{$gy$}\hspace{3mm}$,$ \hspace{3mm}\stackanchor{$\m_2$}{$gx$}\hspace{3mm}$,$ \hspace{3mm}\stackanchor{$\m_3$}{$ge$}}
 	
 	\mytext
 	
 \end{center} 

It is easy to see that $\mathcal{M}'$ is also a well ordered cover of $I$.
 \end{example}

The example above is a special case of a simple observation, stated in
\cref{p:modification}, which is often powerful enough to prove the
subadditivity inequality \eqref{e:third} for all $i$.

\begin{proposition}[{\bf Reordering well ordered covers}]
	\label{p:modification}
	Let $I=(\m_1,\ldots,\m_s,\n_1,\ldots,\n_k)$ be a square-free
        monomial ideal which has $\m_1,\ldots,\m_s$ as
        a well ordered cover. Define for each $1 \leq i \leq k$,
	\begin{equation}\label{e:alpha}
          \alpha_i=\max \big \{j \st \m_j \mid \lcm(\n_i , \m_{j+1} ,
        \cdots , \m_s) \big \}.
        \end{equation}

        If $\ell=\min(\alpha_1,\ldots,\alpha_k)>1$, then
  \begin{enumerate}
  \item  for every $i\in \{2,\ldots,\ell\}$ the sequence
		$$\m_i,\m_{i+1},\ldots,\m_s,
    \m_1,\ldots,\m_{i-1}$$ is a well ordered cover of $I$; 
  \item $t_s \leq t_{s-i}+t_i \quad$ for $1 \leq i \leq \ell -1$.
            \end{enumerate}
      
	\end{proposition}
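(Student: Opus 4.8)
The plan is to prove part (1) directly from the definition of a well ordered cover, and then obtain part (2) as a quick consequence of \cref{p:complements} and \cref{p:secondpart}.

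\medskip

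For part (1), fix $i \in \{2,\ldots,\ell\}$ and set $\mathcal{M}_i = \m_i, \m_{i+1},\ldots,\m_s,\m_1,\ldots,\m_{i-1}$. The underlying \emph{set} of facets is unchanged, so $\{\m_1,\ldots,\m_s\}$ is still a minimal cover; only the ordering has been cyclically shifted. It remains to verify the well ordered condition for the new order. Take any generator $\m' \notin \{\m_1,\ldots,\m_s\}$; then $\m' = \n_p$ for some $p \in \{1,\ldots,k\}$, and by hypothesis $\alpha_p \geq \ell \geq i$. By the definition of $\alpha_p$ in \eqref{e:alpha} there is some $j$ with $\alpha_p \geq j$ (in particular we may take $j = \alpha_p$, but any $j$ witnessing membership works as long as $j \geq i$ — and $j = \alpha_p \geq i$ does) such that $\m_j \mid \lcm(\n_p, \m_{j+1},\ldots,\m_s)$. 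Since $j \geq i$, in the reordered sequence $\mathcal{M}_i$ the element $\m_j$ appears with all of $\m_{j+1},\ldots,\m_s$ still to its right (they occupy positions before the "wrap-around" block $\m_1,\ldots,\m_{i-1}$), and $\m_j$ is not the last element of $\mathcal{M}_i$ because $j \leq s$ and the sequence ends with $\m_{i-1}$ (here we use $i \geq 2$, so the wrap-around block $\m_1,\ldots,\m_{i-1}$ is nonempty, hence $\m_j$ with $j\geq i$ is strictly interior). Thus the divisibility $\m_j \mid \lcm(\n_p, \m_{j+1},\ldots,\m_s)$ is exactly a witness of the well ordered condition for $\mathcal{M}_i$: the facets to the right of $\m_j$ in $\mathcal{M}_i$ include $\m_{j+1},\ldots,\m_s$ (and possibly more), and enlarging the $\lcm$ on the right-hand side preserves divisibility. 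Hence $\mathcal{M}_i$ is a well ordered cover of $I$.

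\medskip

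For part (2), fix $i$ with $1 \leq i \leq \ell - 1$, so that $i+1 \leq \ell$ and the reordered sequence $\mathcal{M}_{i+1} = \m_{i+1},\ldots,\m_s,\m_1,\ldots,\m_i$ is a well ordered cover of $I$ by part (1). Applying \cref{p:secondpart} to $\mathcal{M}_{i+1}$ with the split after its first $s-i$ entries $\m_{i+1},\ldots,\m_s$: setting $\m = \lcm(\m_{i+1},\ldots,\m_s)$ and $\m' = \lcm(\m_1,\ldots,\m_i)$, we get that $\m_1,\ldots,\m_i$ is a well ordered cover of $I_{[\m']}$ and hence $\beta_{i,\m'}(S/I) \neq 0$, which is exactly \eqref{e:second} (with $\m,\m'$ named as there). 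Combining with \eqref{e:first} applied to the original cover — namely $\beta_{s-i,\lcm(\m_{i+1},\ldots,\m_s)}(S/I) \neq 0$ — and noting that $\lcm(\m_1,\ldots,\m_i)$ and $\lcm(\m_{i+1},\ldots,\m_s)$ are lattice complements by \cref{p:complements}, the degree inequality $\deg\m + \deg\m' \geq n$ gives $t_{s-i} + t_i \geq n = t_s$, which is \eqref{e:third}.

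\medskip

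The only real subtlety — and the step I would be most careful about in writing it up — is the index bookkeeping in part (1): one must confirm that after the cyclic shift, the witness index $j$ (with $j \geq i$) is never the \emph{last} position of the reordered sequence, since the definition of a well ordered cover requires the witnessing index to be $\leq$ (length $-1$). This is precisely why the hypothesis $\ell > 1$ and the restriction $i \geq 2$ are needed: they guarantee the tail block $\m_1,\ldots,\m_{i-1}$ is nonempty, so every $\m_j$ with $j \in \{i,\ldots,s\}$ sits strictly to the left of the end. Everything else is a routine unwinding of definitions together with citations of \cref{p:complements} and \cref{p:secondpart}.
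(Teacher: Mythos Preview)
Your proof is correct and follows essentially the same approach as the paper's. For part~(1) the paper simply says it ``follows directly from the definition of well ordered covers,'' and your argument is precisely the unwinding of that claim; for part~(2) both you and the paper apply part~(1) to the shift $\m_{i+1},\ldots,\m_s,\m_1,\ldots,\m_i$, invoke \cref{p:secondpart} to get $\beta_{i,\m'}\neq 0$, obtain $\beta_{s-i,\m}\neq 0$ (you via \eqref{e:first}, the paper by observing directly from \eqref{e:alpha} that $\m_{i+1},\ldots,\m_s$ is a well ordered cover of $I_{[\m]}$), and conclude via \cref{p:complements}.
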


\begin{proof} Statement 1 follows directly from the definition of
    well ordered covers.  For the second statement, by part 1, for $1 \leq i <\ell$, we have
    $$\m_{i+1},\ldots,\m_s,\m_1,\ldots,\m_{i}$$
    is a well ordered cover for $I$. By \eqref{e:alpha}
    $$\m_{i+1},\ldots,\m_s$$
    is a well ordered cover of $I_{[\m]}$ where
    $\m=\lcm(\m_{i+1},\ldots,\m_s)$.
    By \cref{p:secondpart} 
    $$\m_1,\ldots,\m_i$$ is a well ordered cover of
    $I_{[\m']}$ where
    $$\m'=\lcm(\m_1,\ldots,\m_i).$$
    By \cref{p:complements}, $\m$ and $\m'$ are complements in $\LCM(I)$,
    and by \cref{t:Nursel} $$\beta_{s-i,\m} \neq 0 \qand \beta_{i,\m'}
    \neq 0,$$ which together imply that $t_s \leq t_{s-i}+t_{i}$.
\end{proof}

\begin{example}
  The ideal $I$ from \cref{e:earlier} has the following
    well ordered cover
    $$\M : gy\; ,\; gx \;,\; ge \;,\; gf \;,\; bcd \;,\; gh \;,\; gi
     \;,\; abc.$$ In particular $\beta_{8,11}(S/I)\neq 0$ and
     $t_8=11$. The reordering in \cref{p:modification} of
     $\mathcal{M}$ yields the following well ordered cover

$$\M' : gf \; , \; bcd \; , \; gh \; , \; gi \; , \; abc \; , \; gy \;
     , \; gx \; , \; ge,$$ where $\ell=4$. By part 2 of
     \cref{p:modification} and using the new well  ordered cover
     $\mathcal{M}^\prime$, we have:
\begin{enumerate}
\item $t_8\leq t_7+t_1$. Here we take
  $\m=abcdfghixy=\lcm(gf,bcd,gh,gi,abc,gy,gx)$ and $\m'=ge$.
\item $t_8 \leq t_6+t_2$. Here we take
  $\m=abcdfghiy=\lcm(gf,bcd,gh,gi,abc,gy)$ and
  $\m'=gex=\lcm(gx,ge)$. Note that this inequality was also done in
  \cref{e:runexample-bouquets} using simplicial bouquets.
\item $t_8 \leq t_5+t_3$. Here we take
  $\m=abcdfghi=\lcm(gf,bcd,gh,gi,abc)$ and $\m'=gexy=\lcm(gy,gx,ge)$.
\end{enumerate}

The only remaining case is $a=b=4$. If we take
$\m=bcdfghi=\lcm(gf,bcd,gh,gi)$ and
$\m'=abcgexy=\lcm(abc,gy,gx,ge)$, then $\{abc,gy,gx,ge\}$ is a well ordered cover of $I_{[\m']}$ by \cref{p:secondpart}, and it is
easy to check that $\{gf,bcd,gh,gi\}$ is a well ordered cover of
$I_{[\m]}$. Hence $\be_{4,\m}(S/I)\neq 0$ and $\be_{4,\m'}(S/I)\neq 0$
by \cref{t:Nursel}. Also as $\m$ and $\m'$ are lattice complements (by
\cref{p:complements}), we get $t_4+t_4 \geq \deg(\m)+\deg(\m')> 11=t_8$.

\end{example}


\end{document}